\numberwithin{equation}{section}
\newtheorem{theorem}{Theorem}[section]
\newtheorem{lemma}[theorem]{Lemma}
\newtheorem{proposition}[theorem]{Proposition}
\theoremstyle{definition}
\newtheorem{definition}[theorem]{Definition}
\newtheorem{example}[theorem]{Example}
\newcommand{\NN}{\mathbb{N}}
\newcommand{\ZZ}{\mathbb{Z}}
\newcommand{\PP}{\mathbb{P}}
\newcommand{\TT}{\mathbb{T}}
\newcommand{\C}{\mathcal{C}}
\newcommand{\supp}{\operatorname{supp}}
\newcommand{\reg}{\operatorname{reg}}
\renewcommand{\tt}{\mathbf{t}}
\newcommand{\J}{\mathcal{J}}
\newcommand{\K}{\mathcal{K}}
\newcommand{\T}{\mathcal{T}}
\newcommand{\E}{\mathcal{E}}
\newcommand{\B}{\mathcal{B}}
\newcommand{\ts}{\textstyle}
\renewcommand{\ss}{\scriptstyle}
\begin{document}

\title[Parameterized codes over graphs]{Parameterized codes over graphs}

\thanks{The first author was partially supported by the 
Centre for Mathematics of the University of Coimbra - UIDB/00324/2020, 
funded by the Portuguese Government through FCT/MCTES. 
The second author was partially supported by FCT/Portugal through CAMGSD, IST-ID ,
projects UIDB/04459/2020 and UIDP/04459/2020.}

\author{Jorge Neves}
\address{\emph{J.~Neves}: University of Coimbra, CMUC, Department of Mathematics, 3001-501 Coimbra, Portugal.}
\email{neves@mat.uc.pt}

\author{Maria Vaz Pinto}
\address{\emph{M.~Vaz Pinto}: University of Lisbon, IST, Department of Mathematics,
Avenida Rovisco Pais, 1049-001 Lisbon, Portugal.}
\email{vazpinto@math.tecnico.ulisboa.pt}

\subjclass[2010]{Primary 13P25; Secondary 11T71, 94B27.} 

\maketitle
\centerline{\footnotesize\emph{Dedicated to Rafael Villarreal, on the occasion of his 70th birthday.}}

\begin{abstract}
In this article we review known results on parameterized linear codes over graphs, introduced
by Renter\'ia, Simis and Villarreal in 2011. Very little is known about their basic parameters and invariants. 
We review in detail the parameters dimension, regularity and minimum distance. As regards the parameter 
\emph{dimension}, we explore the connection to Eulerian ideals in the ternary case and we give new combinatorial formulas. 
\end{abstract}


\section{Introduction}\label{sec: intro}

A parameterized code over a graph is a linear code obtained by evaluating forms of 
fixed degree on a set of points obtained from the graph, in projective space over a finite field. 
They were introduced by Renter\'ia, Simis and Villarreal in \cite{ReSiVi} and, with some exceptions,
their study is wide open. In this article we will touch upon the basic parameters
and invariants of these codes, reviewing known results. Section~\ref{sec: dim} concerns the parameter \emph{dimension}
and focuses on the case of ternary linear codes, by exploring the relation with Eulerian ideals. 
Theorem~\ref{thm: characterizing standard monomials}, which gives a combinatorial formula for the dimension of 
parameterized code over a graph in the ternary case, and Theorem~\ref{thm: dimension even cycle}, which 
gives this formula explicitly in the case of an even cycle, are both new. 
Section~\ref{sec: reg} is dedicated to the invariant \emph{regularity} and Section~\ref{sec: mindist}
to the parameter \emph{minimum distance}.

\medskip

Let $G$ be a simple graph. We assume that $V_G=\{1,2,\dots,n\}$ and 
we denote $s=|E_G|$, which we always assume positive. We also 
fix a choice of ordering of the edges, $e_1,\dots,e_s$. 
Take $K$ to be a field and consider the two polynomial rings $K[x_1,\dots,x_n]$
and $K[t_1,\dots,t_s]$. (It is convenient to identify $E_G$ with the set $\{t_1,\dots,t_s\}$. 
Thus we may refer to the monomial obtained by multiplying a given set of edges.) 
Defining a homorphism of polynomial rings 
$\varphi \colon K[t_1,\dots,t_s] \to K[x_1,\dots,x_n]$ by
$$
t_k \mapsto x_ix_j 
$$
if and only if $t_k$ is the edge $\{i,j\}$, we obtain a rational 
map of $\PP^{n-1}$ to $\PP^{s-1}$, which, when restricted to the projective torus
$$
\TT^{n-1} = \{(x_1,\dots,x_n)\in \PP^{n-1} : x_i \not = 0, \text{ for all } i\},
$$
is a morphism. We denote the image of $\TT^{n-1}$ by this morphism by $X$. 
This set is then a subset (and, moreover, subgroup)
of the corresponding projective torus in $\PP^{s-1}$. The set $X$ is called the
\emph{projective algebraic toric set parameterized by the edges of $G$}.
Assume $K$ is finite. Then $X$ is also finite and 
the number of its elements can be determined as a function of $G$ (see Theorem~\ref{thm: number of points}, below). 
At this point, let us denote this number by $m$ and let $X=\{P_1,\dots,P_{m}\}$ correspond to a choice of 
ordering. Let $d\geq 0$. Then, the \emph{parameterized code of order $d$
over $G$}, denoted by $C_X(d) \subseteq K^{m}$,
is the image of the space of homogeneous polynomials in $t_1,\dots,t_s$, of degree $d$, by the map 
defined by 
\begin{equation}\label{eq: definition of ev}
f \to \left (\frac{f(P_1)}{f_0(P_1)},\dots,\frac{f(P_m)}{f_0(P_m)} \right)\in K^m,
\end{equation}
for every $f\in K[t_1,\dots,t_s]_d$ and where $f_0 = t_1^d$. 
\medskip 

A graph gives a sequence of linear codes:
$$
C_X(0), C_X(1),\dots, C_X(d),\dots
$$
all of which are subspaces of $K^m$. The list of dimensions of the codes in this sequence starts with $1$ and is
stricly increasing until it reaches $m$. (We will explain this in more 
detail in Section~\ref{sec: dim}). From a coding theory point of view, the degree at which the dimension of 
$C_X(d)$ reaches $m$ is an important parameter of this construction. 
We call it the \emph{index of regularity} (or, simply the regularity) for reasons we will explain later.
Other important invariants of the codes include their \emph{minimum distances}, which is the 
minimum number of nonzero components of a vector over all non-zero vectors in the code, 
and their \emph{length} (the number of components of a vector); which in this construction is $m$, common to all
codes in the sequence. Given that $C_X(d)$ are constructed from $G$, the expectation is that 
all of these invariants are in some way related to invariants of the graph. 
For a general graph, not much is known about the dimension and minimum distance of these codes. 
There has, however, been significant progress on the computation of the index of regularity and we 
will postpone a detailed account to Section~\ref{sec: reg}. As for the parameter length,
denoted above by $m=|X|$, a formula, holding for any graph, 
was given in \cite{even}. To state this result, let us denote the 
number of connected components of $G$ by $b_0(G)$ and let $q$ denote the cardinality of the field.

\begin{theorem}\label{thm: number of points} If $G$ is a bipartite graph then 
$$
|X|=(q-1)^{n-b_0(G)-1}.
$$
If $G$ is non-bipartite then 
$$
\renewcommand{\arraystretch}{1.3}
|X|= \left \{
\begin{array}{ll}
\bigl(\frac12\bigr)^{\gamma-1}(q-1)^{n-b_0(G)+\gamma-1} & \text{if $q$ is odd,}\\
(q-1)^{n-b_0(G)+\gamma-1} & \text{if $q$ is even,}
\end{array}
\right.
$$
where $\gamma$ is the number of 
non-bipartite components. 
\end{theorem}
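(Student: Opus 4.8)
The plan is to exploit the group structure of the construction. Since $\varphi$ is a monomial map, the induced morphism $\TT^{n-1}\to\TT^{s-1}$, say $\bar\psi$, is a homomorphism of finite abelian groups whose image is precisely $X$. Writing $\TT^{n-1}=(K^*)^n/D$, where $D$ is the diagonal copy of $K^*$, I would use $|X|=|\TT^{n-1}|/|\ker\bar\psi|=(q-1)^{n-1}/|\ker\bar\psi|$. The kernel consists of those classes $[x]$ whose image lands on the diagonal of $(K^*)^s$, that is, those $x=(x_1,\dots,x_n)\in(K^*)^n$ for which the edge products $x_ix_j$ take a single common value $c\in K^*$ over all edges $\{i,j\}$. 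If $W\subseteq(K^*)^n$ denotes this solution set, then $\ker\bar\psi=W/D$, and since $|D|=q-1$ this yields the clean identity $|X|=(q-1)^n/|W|$. Everything thus reduces to counting $|W|$.

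To count $W$, I would first fix the common value $c\in K^*$ and count solutions one connected component at a time, since the constraints $x_ix_j=c$ couple only vertices in the same component of $G$. On a fixed connected component, choosing the value $a$ at one vertex forces the values at all others by propagating $x_ix_j=c$ along paths, the value alternating between $a$ and $c/a$ according to the parity of the distance. For a bipartite component this propagation is consistent for every $a\in K^*$, contributing a factor $q-1$. For a non-bipartite component, traversing an odd cycle forces $a=c/a$, i.e. $a^2=c$, after which the two color classes collapse and every vertex must carry the same value $a$; hence the number of solutions on such a component equals the number of square roots of $c$ in $K^*$.

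Summing the product of these local counts over all $c\in K^*$ then produces $|W|$, and this is exactly where the parity of $q$ enters. If $G$ is bipartite there are no square-root constraints, every $c$ contributes $(q-1)^{b_0(G)}$, and summing over the $q-1$ values of $c$ gives $|W|=(q-1)^{b_0(G)+1}$, whence the first formula. If $G$ has $\gamma\ge 1$ non-bipartite components, each contributes the number of square roots of $c$: when $q$ is even the squaring map on $K^*$ is a bijection, so this factor is always $1$ and all $c$ survive, giving $|W|=(q-1)^{b+1}$ with $b=b_0(G)-\gamma$; when $q$ is odd the factor is $2$ for each of the $(q-1)/2$ nonzero squares and $0$ otherwise, so summing yields $\tfrac{q-1}{2}\cdot 2^{\gamma}\cdot(q-1)^{b}=2^{\gamma-1}(q-1)^{b+1}$. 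Substituting into $|X|=(q-1)^n/|W|$ and replacing $b=b_0(G)-\gamma$ gives the three stated expressions.

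The routine parts are the two quotient computations and the per-component propagation argument. The step demanding the most care is the non-bipartite case: one must verify that an odd cycle not only forces $a^2=c$ but also identifies the two color classes to a single value, and then correctly convert the square-root count into the factor $2^{\gamma-1}$ together with the parity dichotomy. I expect the main obstacle to be the bookkeeping of the interaction between the global constant $c$ and the local square-root conditions across components, rather than any single delicate estimate.
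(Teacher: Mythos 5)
Your proposal is correct, and the steps you flag as delicate all go through. The reduction $|X|=(q-1)^n/|W|$ is valid: the monomial map $(K^*)^n\to(K^*)^s$ is a group homomorphism carrying the diagonal into the diagonal, the diagonal $D$ of $(K^*)^n$ lies inside $W$ (any constant tuple gives the common value $c=a^2$), and $W$ is a subgroup, so $\ker\bar\psi=W/D$ has order $|W|/(q-1)$. The per-component count is also sound. On a bipartite component with classes $A,B$, the solutions for fixed $c$ are exactly $x\equiv a$ on $A$ and $x\equiv c/a$ on $B$, giving $q-1$ of them. On a non-bipartite component, propagating along a spanning tree and then imposing consistency on a non-tree edge joining vertices of the same parity class forces $a^2=c$ in both cases (note $(c/a)^2=c$ also yields $a^2=c$), after which $c/a=a$ and the component carries the constant value $a$; so the count is the number of square roots of $c$ in $K^*$, as you claim. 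Summing $(q-1)^{b}\,r(c)^{\gamma}$ over $c\in K^*$, with $b=b_0(G)-\gamma$ and $r(c)$ the square-root count, correctly produces the three cases, since squaring on $K^*$ is a bijection when $q$ is even and is two-to-one onto the $(q-1)/2$ nonzero squares when $q$ is odd; substituting back gives exactly the stated formulas, including the factor $\bigl(\tfrac12\bigr)^{\gamma-1}$.

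As for comparison: the paper gives no in-text proof of Theorem~\ref{thm: number of points}, deferring entirely to \cite[Theorem 3.2]{even}, so there is no internal argument to measure yours against. Your proof shares the basic mechanism of the cited source --- $X$ is the image of a homomorphism between tori, so counting $|X|$ reduces to counting a kernel --- but your execution is fully elementary and self-contained: the spanning-tree propagation with the global constant $c$ replaces any appeal to the multiplicative linear algebra of the incidence matrix modulo $q-1$, and it makes transparent exactly where the parity of $q$ and the number $\gamma$ of non-bipartite components enter, namely through the square-root count on each odd component. The one point worth making explicit in a written version is the observation, used silently above, that a non-bipartite connected component always contains an edge joining two vertices in the same parity class of any spanning tree, which is what triggers the constraint $a^2=c$.
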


\begin{proof}
See \cite[Theorem 3.2]{even}.
\end{proof}


\section{Dimension}\label{sec: dim}

From now on, let us denote $S=K[t_1,\dots,t_s]$ and let $I(X)\subseteq S$ be the homo\-ge\-ne\-ous vanishing ideal 
of $\{P_1,\dots,P_m\}$. Then $S/I(X)_d \simeq C_X(d)$ and therefore the dimension of $C_X(d)$, as 
$d\geq 0$, coincides with the Hilbert function of the module $S/I(X)$. 
Since $I(X)$ is the vanishing ideal of a set of points in projective space, 
we know that the Hilbert function of $S/I(X)$, and hence $\dim C_X(d)$, 
is strictly increasing until it reaches a constant value equal to the number of points of $X$. 
\medskip 

Denote the projective torus $\TT^{s-1}\subseteq \PP^{s-1}$ by $\TT$. As $X\subseteq \TT$ we get
\begin{equation}\label{eq: L9}
I(\TT) = (t_1^{q-1}-t_s^{q-1},\dots,t_{s-1}^{q-1}-t_s^{q-1})\subseteq I(X),
\end{equation}
From the point of view of the Hilbert Function,
the easiest case is when $X$ coincides with the projective torus $\TT = \TT^{s-1}\subseteq \PP^{s-1}$ and, hence, 
$I(X)$ is a complete intersection. We may use the Hilbert series of $S/I(\TT)$ to obtain
\begin{equation}\label{eq: dimensions for torus case}
\ts \dim C_\TT(d) = \sum_{j\geq 0} (-1)^j \binom{s-1}{j}\binom{s-1+d-(q-1)j}{s-1}.
\end{equation}
(see \cite{duursmaEtAl, secondhamming, torus} for details).  According to \cite[Theorem~4.4]{torus}, $X=\TT$ is the only case 
in which $I(X)$ is a complete intersection. Note that the formula of Theorem~\ref{thm: number of points}
gives $X=\TT$ if $G$ is a tree or, more generally, a forest, or when $G$ is a unicyclic graph 
with a unique odd cycle. On the opposite end of the class of
bipartite graphs are the complete bipartite graphs $\K_{a,b}$. In this case $I(X)$ is far from 
being a complete intersection, but the dimension 
function of $C_X(d)$ is known. To state it, let $k(s,d,q)$ be the summation on the right of \eqref{eq: dimensions for torus case}.
Then, 
$$
\ts \dim C_X(d) =  k(a,d,q)\, k(b,d,q).
$$
(see \cite[Theorem 5.2]{GonRen}). To our knowledge, these are the only two instances in which 
a formula for the dimension function of parameterized codes is known. 

\subsection{Dimension in the case of ternary codes}
When $K=\ZZ/3$, the situation is bettered by the recent results on the \emph{Eulerian ideal} of $G$. 
This ideal, defined in \cite{joinsAndEars}, is the pre-image of the ideal 
$$
(x_i^2-x_j^2 : 1\leq i,j\leq n)\subseteq K[x_1,\dots,x_n]
$$
by the map $\varphi$, defined at the begining of Section~\ref{sec: intro}. By \cite[Proposition~2.9]{joinsAndEars},
when $K=\ZZ/3$, the ideal $I(X)$ and the Eulerian ideal are the same. 
A set of generators which is, moreover, a Gr\"obner basis, is available from \cite{neves}. 
To state the result let us fix some notation. Given $\alpha = (\alpha_1,\dots,\alpha_s) \in \NN^s$
let us denote $t_1^{\alpha_1}\cdots t_s^{\alpha_s}$ by $\tt^{\alpha}$. We say that $\tt^\alpha-\tt^\beta$ is an Eulerian 
binomial if $\tt^\alpha$ and $\tt^\beta$ are relatively prime, square-free, of the same degree, and 
the edges with index set $\supp(\alpha)\sqcup \supp(\beta) \subseteq \{1,\dots,s\}$ induce 
a subgraph of $G$ with vertices of even degree; i.e., an Eulerian subgraph. We denote 
by $\E$ the (finite) set of all Eulerian binomials and by $\T= \{t^2_i-t^2_j : 1\leq i,j\leq s\}$.

\begin{theorem}\label{thm: Grobner basis}
Let $K=\ZZ/3$. The set of homogeneous binomials $\T\cup \E$ 
is a Gr\"obner basis of $I(X)$ with respect to the graded reverse lexicographic order in $S$.
\end{theorem}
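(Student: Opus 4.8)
The plan is to verify that $\T\cup\E$ is a Gr\"obner basis through the leading-term divisibility criterion, exploiting that $I(X)$ is a binomial (lattice) ideal. Since $K=\ZZ/3$ and, by \cite[Proposition~2.9]{joinsAndEars}, $I(X)$ agrees with the Eulerian ideal $\varphi^{-1}\big((x_i^2-x_j^2)\big)$, a binomial $\tt^\alpha-\tt^\beta$ lies in $I(X)$ if and only if $\varphi(\tt^\alpha)\equiv\varphi(\tt^\beta)$ modulo $(x_i^2-x_j^2)$. Writing $\varphi(\tt^\alpha)=\prod_l x_l^{d_l(\alpha)}$, where $d_l(\alpha)$ is the $\alpha$-weighted degree of the vertex $l$, and using that this ideal is homogeneous, the congruence amounts to the two conditions $|\alpha|=|\beta|$ and $d_l(\alpha)\equiv d_l(\beta)\pmod 2$ for every vertex $l$. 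This already yields the containment $\T\cup\E\subseteq I(X)$: for $\T$ it is clear, and for an Eulerian binomial the evenness of all vertex degrees of the subgraph on $\supp(\alpha)\sqcup\supp(\beta)$ is exactly the parity condition. It also identifies $I(X)$ as the lattice ideal of $L=\{\gamma\in\ZZ^s:\sum_k\gamma_k=0,\ d_l(\gamma)\text{ even for all }l\}$.

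For a lattice ideal the standard monomials are precisely the $\prec$-minimal representatives of the cosets of $L$, so it suffices to show that the leading monomial of every primitive binomial $\tt^{\gamma^+}-\tt^{\gamma^-}\in I(X)$ (with $\tt^{\gamma^+}\succ\tt^{\gamma^-}$) is divisible by $\operatorname{in}_\prec(g)$ for some $g\in\T\cup\E$. I would split into two cases according to whether the leading monomial $\tt^{\gamma^+}$ is square-free. If it is not, some $t_i^2$ divides it; a short grevlex computation shows that $t_s^2$ can never divide the leading monomial of a primitive binomial (the smallest variable carrying a high power makes a monomial $\prec$-smaller), so in fact $t_i^2\mid\tt^{\gamma^+}$ for some $i<s$, and then $t_i^2=\operatorname{in}_\prec(t_i^2-t_s^2)$ does the job. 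Thus $\T$ takes care of all non-square-free leading terms.

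The substantive case is when $\tt^{\gamma^+}$ is square-free, where only $\E$ can help, its leading terms being square-free. Here I would reformulate combinatorially: the parity condition says exactly that $\gamma\bmod 2$ lies in the cycle space of $G$ over $\ZZ/2$, that is, the edge set $F:=\supp(\gamma^+)$ together with the odd-multiplicity edges of $\gamma^-$ forms an even (Eulerian) subgraph $H$, and primitivity makes this union disjoint. When $\gamma^-$ is itself square-free, $H=\supp(\gamma^+)\sqcup\supp(\gamma^-)$ is an Eulerian subgraph split into two equal, disjoint, square-free halves, so $\tt^{\gamma^+}-\tt^{\gamma^-}$ is literally an Eulerian binomial with leading monomial $\tt^{\gamma^+}$, and we are done. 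The work is therefore to reduce to this situation: using the torus relations $t_i^2-t_j^2$ underlying $\T$ one can lower the repeated entries of $\gamma^-$, and the residual imbalance must be absorbed by rerouting through the cycle decomposition of $H$ so as to exhibit a genuine equal-halves Eulerian sub-binomial whose leading monomial still divides $\tt^{\gamma^+}$.

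I expect this last step to be the main obstacle. The tension is between the binary cycle space, which is insensitive to edge multiplicities and to degree, and the requirement that an Eulerian binomial be \emph{homogeneous} (equal-size halves) with leading side contained in $F$; a single cycle of $H$ generally splits unevenly between $F$ and its complement, so one must combine cycles and control the residual power of $t_s$ while keeping the grevlex leading term inside $F$. Making this balancing argument precise, equivalently verifying that $\operatorname{in}_\prec(\T\cup\E)$ loses no minimal coset representative, is the crux. An alternative organisation would apply Buchberger's criterion directly: the $\T$--$\T$ and coprime pairs are immediate, while the $\T$--$\E$ and $\E$--$\E$ $S$-polynomials reduce to zero by the same combinatorial fact that symmetric differences of even subgraphs are even, with the identical degree-bookkeeping forming the core of the argument.
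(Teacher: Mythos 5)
Your overall strategy is sound, and the paper itself offers no argument to compare against (its ``proof'' is a citation to \cite[Theorem 3.3]{neves}), so your attempt has to stand on its own --- and as written it does not, by your own admission. The correct parts: the lattice-ideal description of $I(X)$ via vertex-degree parities, the containment $\T\cup\E\subseteq I(X)$, the reduction to primitive binomials (the reduced Gr\"obner basis of a lattice ideal consists of primitive binomials), the observation that under grevlex $t_s$ cannot divide the leading monomial of a binomial with coprime terms (so $\T$ disposes of every non-square-free leading term), and the remark that when \emph{both} $\gamma^+$ and $\gamma^-$ are square-free the primitive binomial is itself Eulerian with the right leading term. The genuine gap is exactly the case you flag and leave open: $\tt^{\gamma^+}$ square-free but $\gamma^-$ carrying repeated edges. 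Moreover, the tactic you propose there --- lowering the repeated entries of $\gamma^-$ by the relations of $\T$ and then ``rerouting through the cycle decomposition'' --- is the wrong move: reducing $\gamma^-$ modulo $\T$ replaces the binomial by a different element of the ideal whose grevlex-larger side may switch (the torus relations push exponent weight onto $t_s$, which makes that side \emph{smaller}), so nothing guarantees the new leading term still divides $\tt^{\gamma^+}$; and a single cycle of the even subgraph indeed splits unevenly between the two sides, with no mechanism in your sketch to repair it. The Buchberger alternative in your last lines is likewise only named, not executed; the $\E$--$\E$ $S$-pair reductions are the same difficulty in different clothes.

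The missing step can in fact be closed without touching cycle decompositions at all, by rebalancing inside the single even subgraph you already have. Let $F=\supp(\gamma^+)$ and let $O$ be the set of edges of odd multiplicity in $\gamma^-$; primitivity makes $F$ and $O$ disjoint, the parity condition makes $F\sqcup O$ an Eulerian subgraph $H$, and homogeneity gives $|F|=|O|+2k$ with $k=\sum_e\lfloor\gamma^-_e/2\rfloor$. If $k=0$ you are in the case already handled. If $k\geq 1$, simply \emph{discard} the even-multiplicity edges of $\gamma^-$ (they are invisible to parity) and repartition the edge set of $H$: choose $A\subseteq F$ with $|A|=\tfrac{|F|+|O|}{2}=|O|+k$ such that $A$ avoids the largest-index edge of $H$ (possible because $|F\setminus A|=k\geq 1$, so there is room to exclude it even when that edge lies in $F$), and set $B=O\sqcup(F\setminus A)$. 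Then $\tt^A-\tt^B$ is square-free, coprime, homogeneous, its support induces the even subgraph $H$, hence it lies in $\E$; since the largest edge of $A\sqcup B$ sits in $B$, grevlex gives $\operatorname{lt}(\tt^A-\tt^B)=\tt^A$, and $\tt^A$ divides $\tt^{\gamma^+}$ by construction. With this one paragraph your argument becomes a complete, self-contained proof; without it, the proposal is a correct reduction of the theorem to its hardest step, not a proof.
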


\begin{proof}
See \cite[Theorem 3.3]{neves}. 
\end{proof}

In particular, $I(X)$ is generated in degree $\geq 2$. 
As 
$$
\dim C_X(d) = \dim_K (S/I(X))_d,
$$ 
we deduce that $\dim C_X(0) = 1$  and $\dim C_X(1) = s$, regardless of $G$. 
This holds also for any parameterized code over a graph, over \emph{any} finite field. 
\medskip

A technique that has always proved useful when trying to link the combinatorics of $G$ with 
the algebra of $S/I(X)$, is to take an Artinian quotient of this graded ring. 
This is specially easy to produce since any monomial in $S$ 
is $S/I(X)$-regular. (Indeed since $X$ is a subset of the projective torus $\TT\subseteq \PP^{s-1}$,
a monomial does not vanish at any point of $X$.) To study the dimension function of the 
codes the correct Artinian quotient is $S/(I(X),t_s^2)$, where $t_s$ 
is the last edge of the graph. 

\begin{definition}
Given $d\geq 0$, let $\B_d$ be the set of monomials of degree $d$ 
that are not divisible by any leading term of a polynomial in 
$(I(X),t_s^2)$, with respect to the graded reverse lexicographic order in $S$.
Extend the notation $\B_d$ to negative $d$ by setting $\B_d=\emptyset$ and 
denote the cardinality of $\B_d$ by $\beta(d)$.
\end{definition}

Since, for every $i=1,\dots,s$, $t_i^2$ is a leading term of an element of $(I(X),t_s^2)$
a monomial in $\B_d$ is necessarily square-free. In particular, $\B_d$ is surely empty as soon as 
$d>s$. As $(I(X),t_s^2)$ is generated in degrees $\geq 2$, we deduce that 
$\B_0=\{1\}$ and $\B_1=\{t_1,\dots,t_s\}$. As we show below, the elements of $\B_d$, 
correspond to special sets of edges of the graph. 
Before,  let us reveal the connection with the dimension function of the family of codes 
$C_X(d)$, $g\geq 0$.

\begin{proposition}\label{prop: dimension using standard monomials of Artinian reduction}
Let $K=\ZZ/3$ and $d\geq 0$. Then 
$$
\ts \dim C_X(d) = \sum_{i\geq 0} \beta(d-2i).
$$
\end{proposition}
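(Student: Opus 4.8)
The plan is to relate $\dim C_X(d) = \dim_K (S/I(X))_d$ to the Hilbert function of the Artinian quotient $A = S/(I(X), t_s^2)$. The key observation is that $t_s^2$ is a nonzerodivisor on $S/I(X)$ (indeed, as noted in the excerpt, every monomial is $S/I(X)$-regular, since $X$ lies in the projective torus where no coordinate vanishes). Therefore multiplication by $t_s^2$ gives a short exact sequence of graded modules
\begin{equation}\label{eq: ses for proposition}
0 \longrightarrow (S/I(X))(-2) \xrightarrow{\;\cdot t_s^2\;} S/I(X) \longrightarrow A \longrightarrow 0,
\end{equation}
where $(-2)$ denotes the degree shift. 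First I would record \eqref{eq: ses for proposition} and extract from it the relation on Hilbert functions: writing $H(d) = \dim_K (S/I(X))_d$ and noting that $\dim_K A_d = \beta(d)$ by definition of $\B_d$ (the standard monomials of degree $d$ form a $K$-basis of $A_d$), the exactness in each degree $d$ yields $H(d) = H(d-2) + \beta(d)$.

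Next I would solve this recurrence. Since $H(d) = 0$ for $d < 0$ and $\beta(d) = 0$ for $d < 0$ as well (we set $\B_d = \emptyset$), unwinding $H(d) = \beta(d) + H(d-2)$ repeatedly gives
$$
H(d) = \beta(d) + \beta(d-2) + \beta(d-4) + \cdots = \sum_{i \geq 0} \beta(d-2i),
$$
a finite sum because $\beta(d-2i)$ vanishes once $d - 2i < 0$. Combining with $\dim C_X(d) = H(d)$ finishes the proof.

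The two points that need care, rather than being genuine obstacles, are the following. The first is justifying that $\dim_K A_d = \beta(d)$: this is exactly Macaulay's theorem that the standard monomials with respect to a term order (those not divisible by any leading term of the ideal $(I(X), t_s^2)$) form a $K$-basis of the quotient in each degree; the graded reverse lexicographic Gröbner basis supplied by Theorem~\ref{thm: Grobner basis}, augmented by $t_s^2$, makes $\B_d$ precisely this set of standard monomials. The second is the regularity of $t_s^2$, which I would justify as above via the torus containment, guaranteeing that the left-hand map in \eqref{eq: ses for proposition} is injective so that the alternating sum of dimensions is zero. Neither step is hard; the main content is simply organizing the telescoping recurrence correctly and confirming the base cases, so I do not anticipate a substantive obstacle.
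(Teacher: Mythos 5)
Your proposal is correct and follows essentially the same route as the paper: the short exact sequence induced by multiplication by the regular element $t_s^2$, the identification $\beta(d)=\dim_K(S/(I(X),t_s^2))_d$ via Macaulay's theorem, and the resulting recurrence $H(d)=H(d-2)+\beta(d)$, which the paper resolves by induction on $d$ and you by telescoping --- the same argument in different packaging.
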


\begin{proof}
Let us use induction on $d$.
It is clear that the formula holds for $d=0$ and $d=1$. 
Assume $d>1$. Since $t_s^2$ is $S/I(X)$-regular, the short exact sequence
\begin{equation}\label{eq: L94}
0\to S/I(X)[-2] \stackrel{\cdot t_s^2}{\longrightarrow  } S/I(X) \to S/(I(X),t_s^2) \to 0
\end{equation}
gives $\dim C_X(d) = \dim C_X(d-2) + \dim_K (S/(I(X),t_s^2))_d$. By Macaulay's Theorem, 
the cosets with representatives in $\B_d$ form a $K$-basis of the vector space 
of $(S/(I(X),t_s^2))_d$. In other words, $\beta(d) = \dim_K (S/(I(X),t_s^2))_d$. Hence the formula follows by induction. 
\end{proof}

The key to get a combinatorial formula for $\dim C_X(d)$ is then the combinatorial characterization of 
the elements of $\B_d$. For this, we need a Gr\"obner basis of $(I(X),t_s^2)$, which is easily obtained 
from that of $I(X)$.

\begin{proposition}\label{prop: Grobner basis extended}
Let $K=\ZZ/3$. The set $\T\cup \E \cup \{t_s^2\}$
is a Gr\"obner basis of $(I(X),t^2_s)$ with respect to the graded reverse lexicographic order in $S$.
\end{proposition}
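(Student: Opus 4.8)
The plan is to invoke Buchberger's criterion, leaning on the fact that $\T\cup\E$ is already known to be a Gr\"obner basis of $I(X)$ by Theorem~\ref{thm: Grobner basis}. First observe that $\T\cup\E\cup\{t_s^2\}$ generates $(I(X),t_s^2)$, since $\T\cup\E$ generates $I(X)$. To prove it is a Gr\"obner basis it then suffices to check that the S-polynomial of every pair of its elements reduces to zero modulo the set. The S-polynomials of pairs drawn from $\T\cup\E$ already reduce to zero, because $\T\cup\E$ is a Gr\"obner basis, and a zero-remainder representation over $\T\cup\E$ remains valid over the larger set. Thus the only genuinely new pairs are those consisting of $t_s^2$ together with some $g\in\T\cup\E$. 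For these I would appeal to the coprimality (product) criterion: if the leading monomials of $t_s^2$ and $g$ are relatively prime, then $S(t_s^2,g)$ reduces to zero and the pair may be discarded.

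The heart of the argument is therefore the claim that $t_s$ divides no leading term of $\T\cup\E$, so that $\gcd(\operatorname{lt}(g),t_s^2)=1$ for every such $g$. For $\T$ this is immediate: the leading term of $t_i^2-t_j^2$ with $i\neq j$ is $t_{\min(i,j)}^2$, and $\min(i,j)\leq s-1<s$. For $\E$ I would use the defining feature of the graded reverse lexicographic order, in which, among monomials of equal degree, a \emph{larger} exponent in the variable of highest index makes a monomial \emph{smaller}. Concretely, if $g=\tt^\alpha-\tt^\beta\in\E$, then $\supp\alpha$ and $\supp\beta$ are disjoint, so the largest index $k$ occurring in $g$ belongs to exactly one of them; the monomial containing $t_k$ has the strictly larger $t_k$-exponent and is hence the \emph{trailing} term, while $\operatorname{lt}(g)$ is the monomial not divisible by $t_k$. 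In particular, whenever $t_s$ occurs in $g$ it occurs in the trailing term, so $t_s\nmid\operatorname{lt}(g)$; and if $t_s$ does not occur in $g$ at all the conclusion is trivial.

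Granting this claim, the coprimality criterion disposes of every pair $\{t_s^2,g\}$, all S-polynomials reduce to zero, and Buchberger's criterion gives the result. The one place that requires care—and the step I expect to be the main obstacle to phrase cleanly—is the leading-term computation for Eulerian binomials: one must track the grevlex convention precisely, to be sure that the presence of the highest-index variable, and in particular of $t_s$, always forces a monomial into the trailing position. Once this is pinned down, no explicit S-polynomial calculation is needed, since coprimality of the leading terms makes the relevant reductions automatic.
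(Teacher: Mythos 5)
Your proof is correct and follows essentially the same route as the paper's: both rest on Buchberger's criterion, dispatching the pairs within $\T\cup\E$ by Theorem~\ref{thm: Grobner basis} and the new pairs involving $t_s^2$ by the coprimality criterion. The only difference is that you spell out the grevlex leading-term verification (that $t_s$ never divides the leading term of an element of $\T\cup\E$, since in grevlex the monomial carrying the highest-index variable is the trailing one), a fact the paper asserts without proof; your verification of it is accurate.
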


\begin{proof}
Since $t_s^2$ and the leading term of any binomial in $\T\cup \E$ are coprime, 
their $S$-polynomial reduces to zero. Since $\T \cup \E$ is a Gr\"obner basis,
the $S$-polynomials of all pairs of elements of $\T\cup \E$ also reduce to zero. 
\end{proof}

Let us now introduce the combinatorics. 

\begin{definition}[{\cite[Definition~4.4]{neves}}]\label{def: parity join}
$J\subseteq E_G$ is called a parity join if and only if $|J \cap  E_C | \leq \frac{|E_C|}{2}$,
for every Eulerian subgraph of $C\subset G$ with an even number of edges.
\end{definition}

The terminology of parity join comes from the relation with $T$-joins of cardinality of fixed 
parity, as explained in \cite{neves}. A parity join need not use half the edges of every 
Eulerian subgraph. When it does use half the edges of a given Eulerian subgraph, these need not include 
the last edge. 
\begin{definition}
Given $d\geq 0$, let $\J_d$ denote the set of parity joins, $J\subseteq E_G$, of cardinality 
$d$, that contain the last edge of every Eulerian subgraph $C\subseteq G$ for which 
$|J\cap E_C| = \frac{|E_C|}{2}$. Let us also extend this notation by 
setting $\J_d = \emptyset$, for all $d<0$.
\end{definition}

The proof of the next result is an adaptation of the ideas of \cite{neves}. There, the approach privileges 
fixed parity $T$-joins.

\begin{theorem}\label{thm: characterizing standard monomials}
Let $K=\ZZ/3$. The map $\B_d \to \J_d$ given by 
$$
\tt^\gamma \mapsto \{e_i : i\in \supp(\gamma)\}
$$ 
is well-defined and a bijection. In particular,
$$
\ts \dim C_X(d) = \sum_{i\geq 0} |\J_{d-2i}|.
$$
\end{theorem}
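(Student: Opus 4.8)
The plan is to prove the asserted bijection $\B_d\to\J_d$ and then read off the dimension formula for free: once the map is a bijection we have $\beta(d)=|\B_d|=|\J_d|$, and substituting into Proposition~\ref{prop: dimension using standard monomials of Artinian reduction} gives $\dim C_X(d)=\sum_{i\ge 0}\beta(d-2i)=\sum_{i\ge 0}|\J_{d-2i}|$. So everything reduces to a dictionary between the standard monomials of $(I(X),t_s^2)$ and the parity joins counted by $\J_d$. I would work throughout with the Gr\"obner basis $\T\cup\E\cup\{t_s^2\}$ of Proposition~\ref{prop: Grobner basis extended}. Because each $t_i^2$ is a leading term, every monomial in $\B_d$ is square-free, hence of the form $\tt^\gamma=\prod_{i\in\supp(\gamma)}t_i$ and determined by the edge set $J=\{e_i:i\in\supp(\gamma)\}$ of cardinality $d$; in particular the proposed map is automatically injective, and only well-definedness and surjectivity need argument.

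The crux is the determination of the leading terms of the Eulerian binomials. Given $\tt^\alpha-\tt^\beta\in\E$ arising from an even Eulerian subgraph $C$ with $E_C=\supp(\alpha)\sqcup\supp(\beta)$ and $|\supp(\alpha)|=|\supp(\beta)|=\tfrac12|E_C|$, I would identify which of $\tt^\alpha,\tt^\beta$ is the leading monomial in the graded reverse lexicographic order. Since the two monomials are coprime and square-free of equal degree, they differ in every variable indexed by $E_C$, and the grevlex tie-break selects, at the highest-index such variable, the monomial carrying the \emph{smaller} exponent there. That highest-index variable is the last (largest-index) edge of $C$, so the leading term is precisely the half of $C$ that does \emph{not} contain the last edge of $C$. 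Conversely every equal bipartition of $E_C$ yields an element of $\E$, so as $\tt^\alpha-\tt^\beta$ ranges over $\E$ the leading terms are exactly the $\tt^A$ with $A\subseteq E_C$ an equal-size half of some even Eulerian $C$ avoiding the last edge of $C$. I expect this orientation to be the main obstacle: it is geometrically innocuous but easy to get backwards, and getting it right is exactly what makes the ``contains the last edge'' clause in the definition of $\J_d$ emerge.

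With the leading terms pinned down, $\tt^\gamma$ with support $J$ is standard if and only if no $\tt^A$ of the above form divides it, and divisibility by $\tt^A$ means simply $A\subseteq J$. The equivalence then follows from a short size count. If some even Eulerian $C$ had $|J\cap E_C|>\tfrac12|E_C|$, one could extract from $J\cap E_C$ a half $A$ of $C$ avoiding its last edge (there is room, since $J\cap E_C$ is strictly larger than a half), witnessing $A\subseteq J$ and so showing $\tt^\gamma$ is not standard; hence standardness forces the parity-join inequality $|J\cap E_C|\le\tfrac12|E_C|$ for all $C$. Conversely, assuming $J$ is a parity join, the inclusion $A\subseteq J\cap E_C$ together with $|A|=\tfrac12|E_C|\ge|J\cap E_C|$ forces $A=J\cap E_C$ with $|J\cap E_C|=\tfrac12|E_C|$ and $J\cap E_C$ missing the last edge of $C$; excluding precisely this situation is the additional requirement defining $\J_d$. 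Combining the two directions shows that a square-free $\tt^\gamma$ is standard if and only if $J$ is a parity join of cardinality $d$ meeting the last-edge condition, that is, $J\in\J_d$. This simultaneously yields well-definedness (each element of $\B_d$ lands in $\J_d$) and surjectivity (for $J\in\J_d$ the square-free monomial $\prod_{e_i\in J}t_i$ is standard), completing the bijection and hence the dimension formula.
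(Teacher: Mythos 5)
Your proposal is correct and follows essentially the same route as the paper: the same reduction via Proposition~\ref{prop: dimension using standard monomials of Artinian reduction}, the Gr\"obner basis of Proposition~\ref{prop: Grobner basis extended}, and the grevlex observation that the leading half of an Eulerian binomial is the one avoiding the last edge of the underlying Eulerian subgraph. The only difference is organizational --- you characterize all leading terms of $\E$ up front and then argue by set-theoretic divisibility, whereas the paper constructs the witnessing binomial ad hoc in each direction --- but the content, including the extraction of a half of $J\cap E_C$ avoiding the last edge and the forcing of $A=J\cap E_C$ in the converse, matches the paper's argument.
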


\begin{proof}

\noindent
As $\tt^\gamma \in \B_d$ is square-free, $\{e_i : i\in \supp(\gamma)\}$ is a set of $d$ edges.
Let $C\subseteq G$ be any Eulerian subgraph with an even number of edges. 
Assume
$$
\textstyle |\J(\tt^\gamma) \cap E_C|> \frac{|E_C|}{2}.
$$ 
Let $\tt^\alpha$ be the product of the first $\frac{|E_C|}{2}$ edges in $\J(\tt^\gamma)\cap E_C$ and 
let $\tt^\beta$ be the product of the remaining edges of $C$. Then $\tt^\alpha - \tt^\beta$
is an Eulerian binomial and, as $\tt^\beta$ is divisible by the last edge of $\J(\tt^\gamma)\cap E_C$, its
leading term is $\tt^\alpha$. But then $\tt^\alpha$ divides $\tt^\gamma \in \B_d$, and this is a contradiction. 
Hence 
$$
\textstyle |\J(\tt^\gamma) \cap E_C|\leq \frac{|E_C|}{2}. 
$$
We deduce that $\{e_i : i\in \supp(\gamma)\}$
is a parity join. Additionally, if 
$$
\ts |\J(\tt^\gamma) \cap E_C|= \frac{|E_C|}{2}
$$ but $\{e_i : i\in \supp(\gamma)\}$
does not contain the last edge of $C$, the same argument leads to a contradiction. 
Hence the map is well-defined. 
\medskip 

\noindent
It is clearly an injective map.
To prove surjectivity, let $J\in \J_d$, let $\tt^\gamma$ be the product of the 
edges in $J$ and let us show that $\tt^\gamma \in \B_d$.
Clearly $\deg(\tt^\gamma) = |J| = d$, so that all we need to show is that $\tt^\gamma$
is not divisible by any leading term of $(I(X),t_s^2)$. Since $\T\cup \E \cup \{ t_s^2 \}$
is a Gr\"obner basis for this ideal (Proposition~\ref{prop: Grobner basis extended}) it is enough 
to check that $\tt^\gamma$ is not divisible by the leading term of any element of $\T\cup \E \cup \{ t_s^2 \}$.
Since $\tt^\gamma$ is square-free, $t_i^2\nmid \tt \gamma$, for all 
$i=1,\dots,s$. Let $g = \tt^\alpha - \tt^\beta\in \E$, with $\operatorname{lt}(g)=\tt^\alpha$ 
(without loss of generality). Let $C\subseteq G$ be the corresponding Eulerian subgraph, i.e.,
the graph induced by $\{e_i : i \in \supp(\alpha)\} \sqcup \{e_j : j \in \supp(\beta)\}\subseteq E_G$
With a view to a contradiction, suppose that $\tt^\alpha \mid \tt^\gamma$.
Then, as $J$ is a parity join, 
$$
\ts |J\cap E_C| = \frac{|E_C|}{2}
$$
which implies that $J\cap E_C = \{e_i : i \in \supp(\alpha)\}$.
But if $J\in \J_d$ then $J$ must contain the last edge of $C$ which means that 
$\tt^\alpha$ is divisible by this edge. But this is a contradiction 
since we are assuming that $\operatorname{lt}(g)=\tt^\alpha$. Hence 
$\tt^\alpha \nmid \tt^\gamma$, for the leading term of any element of $\E$. 
We conclude that $\tt^\gamma\in \B_d$ and hence the map is also surjective.
This bijection yields $|\B_d| = |\J_d|$ and the formula 
for $\dim C_X(d)$ follows from Proposition~\ref{prop: dimension using standard monomials of Artinian reduction}.
\end{proof}

Let us illustrate the applications of this result by 
considering the case when $G$ has no Eulerian subgraphs with an even number of edges.
Note that, by Theorem~\ref{thm: Grobner basis}, $\E = \emptyset$ so that  
$$
I(X)= (\T) = (t_1^2-t_s^2,\dots,t_{s-1}^2-t_s^2)
$$
is a complete intersection and the dimension of $C_X(d)$ is given by \eqref{eq: dimensions for torus case}, with $q=3$.
If $G$ possesses no Eulerian subgraphs with even number of edges then every 
subset of edges is a parity join, hence 
$$
\J_d = \{J\subseteq E_G : |J|= d\}.
$$
Then, by Theorem~\ref{thm: characterizing standard monomials}, 
$$
\ts \dim C_X(d) = \sum_{i\geq 0}^{k}\binom{s}{d-2i}.
$$
To see that this amounts to the same as \eqref{eq: dimensions for torus case} with $q=3$, let us manipulate 
the Hilbert series of $S/I(X)$, as in \cite{torus}, but aiming at our formula. Since the ideal 
$I(X)\subseteq K[t_1,\dots,t_s]$ is a complete intersection of $s-1$ forms of degree two, the Hilbert series of $S/I(X)$ is 
$$
\frac{(1-T^2)^{s-1}}{(1-T)^s} = \frac{(1+T)^s}{1-T^2} = (1+T)^s\sum_{i\geq 0} T^{2i}.
$$
Equating the coefficient of $T^d$,
$$
\ts \dim C_X(d) = \dim_K S/I(X) = \sum_{i\geq 0} \binom{s}{d-2i}.
$$

We end this section by applying Theorem~\ref{thm: characterizing standard monomials} 
to the case of an even cycle.  

\begin{theorem}\label{thm: dimension even cycle}
Let $K=\ZZ/3$ and let $G=\C_{2\ell}$ be a cycle of length $s=2\ell$. 
Then 
$$
\dim C_X(d) = 
\left \{
\begin{array}{ll}
2^{s-2}, & \text{if }d\geq \ell-1, \\
\sum_{i\geq 0} \binom{s}{d-2i}, & \text{if }0\leq d\leq \ell-2. 
\end{array}
\right.
$$
\end{theorem}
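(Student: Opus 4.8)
The plan is to feed the combinatorial description of the sets $\J_d$ into Theorem~\ref{thm: characterizing standard monomials}, so the first task is to determine the Eulerian subgraphs of $\C_{2\ell}$ and hence its parity joins. I would begin by noting that the only subgraph of $\C_{2\ell}$ with all vertices of even degree, other than the empty one, is the whole cycle: once a single edge is used, each of its endpoints is forced to use its second incident edge, and propagating around the cycle forces all $s=2\ell$ edges to be present. Thus the unique Eulerian subgraph with an even number of edges that constrains a parity join is $C=\C_{2\ell}$, with $|E_C|=s=2\ell$, and Definition~\ref{def: parity join} says that $J\subseteq E_G$ is a parity join exactly when $|J|\le \ell$.

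From this I would read off $|\J_d|$. For $d<\ell$ every $d$-subset of edges is a parity join and the equality $|J\cap E_C|=|E_C|/2$ never occurs, so $|\J_d|=\binom{s}{d}$. For $d=\ell$ a parity join of size $\ell$ meets the cycle in exactly half its edges, so membership in $\J_\ell$ forces $J$ to contain the last edge $e_s$; hence $|\J_\ell|=\binom{s-1}{\ell-1}=\tfrac12\binom{s}{\ell}$, the last equality from $\binom{2\ell}{\ell}=2\binom{2\ell-1}{\ell-1}$. Finally $|\J_d|=0$ for $d>\ell$. The case $0\le d\le \ell-2$ of the theorem is then immediate, since every index $d-2i$ in the sum of Theorem~\ref{thm: characterizing standard monomials} is at most $\ell-2<\ell$, giving $\dim C_X(d)=\sum_{i\ge 0}\binom{s}{d-2i}$.

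For $d\ge \ell-1$, because $|\J_k|=0$ once $k>\ell$, the sum of Theorem~\ref{thm: characterizing standard monomials} collapses to the partial sum of the $|\J_k|$ over the parity class of $d$, namely $\dim C_X(d)=\sum_{0\le k\le \ell,\ k\equiv d\,(2)}|\J_k|$. I would then compute the total and the signed total of these numbers. The total is
$$
\ts \sum_{k=0}^{\ell}|\J_k|=\sum_{k=0}^{\ell-1}\binom{2\ell}{k}+\binom{2\ell-1}{\ell-1}=2^{2\ell-1},
$$
using the symmetry of binomial coefficients together with $\binom{2\ell-1}{\ell-1}=\tfrac12\binom{2\ell}{\ell}$, while the signed total $\sum_{k=0}^{\ell}(-1)^k|\J_k|$ vanishes by the partial alternating-sum identity $\sum_{k=0}^{\ell-1}(-1)^k\binom{2\ell}{k}=(-1)^{\ell-1}\binom{2\ell-1}{\ell-1}$. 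Since the signed total is zero the two parity classes contribute equally, so each equals $2^{2\ell-2}=2^{s-2}$, independently of the parity of $d$.

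The step I expect to require the most care is this last one: correctly matching $\dim C_X(d)$ to the sum over the right parity class and handling the single modified term $|\J_\ell|$ consistently in both the plain total and the alternating total. As a sanity check, the stable value $2^{s-2}$ must agree with $m=|X|$, and indeed Theorem~\ref{thm: number of points} gives $(q-1)^{n-b_0(G)-1}=2^{2\ell-2}$ for the bipartite graph $\C_{2\ell}$ with $q=3$; this also furnishes an alternative route to identifying the constant, once the relation $\dim C_X(d)-\dim C_X(d-2)=|\J_d|=0$ for $d>\ell$ shows that the Hilbert function has already stabilized by degree $\ell-1$.
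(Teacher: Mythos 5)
Your proof is correct. Up through the determination of the sets $\J_d$ it coincides with the paper's argument: both note that the only nonempty Eulerian subgraph of $\C_{2\ell}$ is the full cycle, so that $|\J_d|=\binom{s}{d}$ for $0\leq d\leq \ell-1$, $|\J_\ell|=\binom{s-1}{\ell-1}$ (the last edge being forced), and $\J_d=\emptyset$ for $d>\ell$, and both then feed these counts into Theorem~\ref{thm: characterizing standard monomials}; the case $0\leq d\leq\ell-2$ is immediate in both. Where you genuinely diverge is the evaluation of the binomial sums for $d\geq \ell-1$. The paper treats the two threshold degrees separately: $d=\ell-1$ via the identity $\sum_{i}\binom{s}{\ell-1-2i}+\sum_{i}\binom{s}{\ell+1+2i}=2^{s-1}$ combined with the symmetry $\binom{s}{\ell-1-2i}=\binom{s}{\ell+1+2i}$, and $d=\ell$ via a Pascal-identity manipulation converting everything to binomials $\binom{s-1}{\,\cdot\,}$; degrees $d>\ell$ are then reduced to these two cases exactly as you do. You instead handle both parity classes simultaneously: the total $\sum_{k=0}^{\ell}|\J_k|=2^{2\ell-1}$ together with the vanishing of the signed total (via the partial alternating-sum identity $\sum_{k=0}^{\ell-1}(-1)^k\binom{2\ell}{k}=(-1)^{\ell-1}\binom{2\ell-1}{\ell-1}$) forces each parity-class sum to equal $2^{s-2}$, which is a more uniform computation that dispenses with the separate Pascal argument for $d=\ell$. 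Your closing observation is also worth noting as a second, essentially computation-free route the paper does not take: since $\dim C_X(d)-\dim C_X(d-2)=|\J_d|=0$ for $d>\ell$ and the Hilbert function of a set of points is strictly increasing until it reaches $|X|$, the function must already be constant from degree $\ell-1$ on, and the stable value can simply be read off as $|X|=(q-1)^{n-b_0(G)-1}=2^{s-2}$ from Theorem~\ref{thm: number of points}; the paper's proof, by contrast, stays entirely within the combinatorics of parity joins and never invokes the point count.
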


\begin{proof}
Given that a parity join in $G$ is simply a subset of $d\leq \ell$ edges, we get 
$\J_{\ell+i} = \emptyset$, for all $i>0$. Also, an element in $\J_\ell$ must contain the edge 
$t_s$ and so $|\J_\ell| = \binom{s-1}{\ell-1}$. For $0\leq d \leq \ell -1$, the elements of $\J_d$
are the sets of $d$ edges of $G$, without any condition. Thus $|\J_d| = \binom{s}{d}$. Using 
Theorem~\ref{thm: characterizing standard monomials}, if $0\leq d\leq \ell-1$,
$$
\ts \dim C_X(d) = \sum_{i\geq 0} |\J_{d-2i}| = \sum_{i\geq 0} \binom{s}{d-2i}.
$$
The sum of all binomial coefficients of lower indices of the same parity is well-known:
$$
\ts \sum_{i\geq 0} \binom{s}{\ell-1-2i} + \sum_{i\geq 0} \binom{s}{\ell+1+2i} = 2^{s-1}.
$$
Since $s=2\ell$ and hence $\binom{s}{\ell-1-2i} = \binom{s}{\ell+1+2i}$
we deduce that $\dim C_X(\ell-1) = 2^{s-2}$. If $d = \ell$, using Pascal's identity and the same kind of argument as above,
$$
\renewcommand{\arraystretch}{1.5}
\begin{array}{l}
\dim C_X(\ell) = \sum_{i\geq 1} \binom{s}{\ell-2i} + \binom{s-1}{\ell-1}\\
\phantom{\dim C_X(\ell) } = \sum_{i\geq 1} \binom{s-1}{\ell-1-2i} + \sum_{i\geq 1} \binom{s-1}{\ell-2i} + \binom{s-1}{\ell-1} \\
\phantom{\dim C_X(\ell) } = \sum_{i\geq 1} \binom{s-1}{\ell-1-2i} + \binom{s-1}{\ell-1} + \sum_{i\geq 1} \binom{s-1}{\ell-1+2i}\\
\phantom{\dim C_X(\ell) } = 2^{s-2}.
\end{array}
$$
Finally, if $d>\ell$, given that $|\J_{\ell +i}|=0$, for all $i>0$ and given 
the formula of Theorem~\ref{thm: characterizing standard monomials},
we deduce that $\dim C_X(d)$ is equal to either $\sum_{i\geq 0} |\J_{\ell-2i}|$ or to $\sum_{i\geq 0} |\J_{\ell-1-2i}|$,
both of which are equal to $2^{s-2}$.
\end{proof}


\section{Regularity}\label{sec: reg}

Since $I(X)$ is the vanishing
ideal of a set of $m$ points in projective space, the Hilbert polynomial of $S/I(X)$ is constant and equal to 
$m$. In other words, there exists $r$ such that 
$$
\dim C_d(X) = m\iff C_d(X) = K^m,
$$
for all $d\geq r$. (From the coding theory point of view, this is where 
$C_d(X)$ becomes a trivial linear code.) The least $r$ in these conditions is 
called the \emph{index of regularity} of $S/I(X)$. Since any monomial is $S/I(X)$
regular, this module is $1$-dimensional and Cohen--Macaulay. Hence  
the index of regularity coincides with the Castelnuovo--Mumford regularity of 
$S/I(X)$. From now on we will refer to this integer simply by the \emph{regularity} of $S/I(X)$ and 
we will denote it by $\reg S/I(X)$. The next table summarizes the early known results regarding this invariant.
\medskip

\begin{table}[h]
\renewcommand{\arraystretch}{1.5}
\begin{center}
\begin{tabular}{l|l}
 & $\reg S/I(X)$ \\
\hline
$X=\TT^{s-1}$ & $(s-1)(q-2)$ \\
\hline
$G=\mathcal{K}_{a,b}$ & $(\max\{a,b\}-1)(q-2)$ \\
\hline
$G=\mathcal{K}_n, \:\: \scriptstyle n>3$ & $\lceil (n-1)(q-2)/2 \rceil$ \\
\hline
$G=\C_{2\ell}$ & $(\ell-1)(q-2)$ \\
\hline
$G=\mathcal{K}_{a_1,\dots,a_r}, \:\: \scriptstyle r>2$ & $\max\{a_1(q-2),\dots,a_r(q-2),\lceil (n-1)(q-2)/2\rceil\}$ \\
\hline
\end{tabular}
\end{center}
\medskip
\caption{Known values of $\reg S/I(X)$}
\label{table: values of reg}
\end{table}

In Table~\ref{table: values of reg}, $\K_n$ denotes a complete graph on $n>3$ vertices. The value for 
the regularity was given in \cite[Remark~3]{GoReSa13}. In the case of the complete bipartite graph, the regularity 
was obtained in \cite[Corollary~5.4]{GonRen} and the case of
an even cycle, $G=\C_{2\ell}$, in \cite[Theorem~6.2]{even}. The value of regularity for a complete multipartite graph
on $n=a_1+\cdots+a_r$ vertices,
denoted here by \mbox{$G=\mathcal{K}_{a_1,\dots,a_r}$}, 
was given in \cite[Theorem~4.3]{multipartite}.

\subsection{Parallel compositions} A graph is a parallel composition of paths if there exist path graphs $P_1,P_2,\dots,P_r$
such that $G$ is obained by identifying all the first end-points of the paths into a single vertex and all of the second 
end-points of the paths into another vertex. We have used \emph{first} and \emph{second} for the sake of clarity; we do not fix any orientation on the paths. Figure~\ref{fig: parallel composition} illstrates this definiton. 
\begin{figure}[h]
\begin{center}
\begin{tikzpicture}[line cap=round,line join=round, scale=1.5]

\draw [fill=black] (0,-.6) circle (1pt);
\draw [fill=black] (3.45,-.6) circle (1pt);

\draw (0,-.6)..controls (0.15,0)..(.75,0);
\draw [fill=black] \foreach \x in {0.75,1.5}
    {(\x,0)  circle (1pt) -- (\x+.75,0)  };
\draw (2.5,0) node{$\cdots$};
\draw (2.7,0)..controls (3.25,0)..(3.45,-.6);
\draw (3,.15) node {$P_1$};

\draw (0,-.6)..controls (0.15,-.45)..(.75,-.45);
\draw [fill=black] \foreach \x in {0.75,1.5}
    {(\x,-.45)  circle (1pt) -- (\x+.75,-.45) };
\draw (2.5,-.45) node{$\cdots$};
\draw (2.7,-.45)..controls (3.25,-.45)..(3.45,-.6);
\draw (3,-.3) node {$P_2$};

\draw (0,-.6)..controls (0.15,-1.2)..(.75,-1.2);
\draw [fill=black] \foreach \x in {0.75,1.5}
    {(\x,-1.2)  circle (1pt) -- (\x+.75,-1.2) };
\draw (2.5,-1.2) node{$\cdots$};
\draw (2.7,-1.2)..controls (3.25,-1.2)..(3.45,-.6);
\draw (3,-.67) node{$\vdots$};
\draw (3,-1.05) node {$P_r$};

\end{tikzpicture}
\end{center}
\caption{The parallel composition of paths $P_1,P_2,\dots,P_r$.}
\label{fig: parallel composition}
\end{figure}
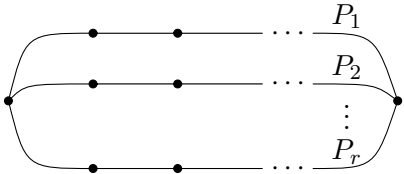
A parallel composition of paths may be bipartite or non-bipartite. 
The bipartite case is when the lengths of $P_i$ have the same parity. 
The value of the regularity of $S/I(X)$ for a graph of this type was computed in 
\cite{parallel}.

\begin{theorem}\label{thm: parallel}
Let $G$ be parallel composition of paths of lengths $k_1,\dots,k_r$, with $r\geq 2$. If 
$G$ is bipartite then 
$$
\reg S/I(X) =
\renewcommand{\arraystretch}{1.3}
\left \{
\begin{array}{ll}
 (\lfloor \frac{k_1}{2}\rfloor +\cdots +\lfloor \frac{k_r}{2} \rfloor)(q-2),& \text{if $k_i$ are odd,}\\
 ( \frac{k_1}{2} +\cdots + \frac{k_r}{2} -1)(q-2),&  \text{if $k_i$ are even.}
\end{array}
\right.
$$
If $G$ is non-bipartite then, assuming without loss of generality that $k_1,\dots,k_\ell$ are even and 
$k_{\ell+1},\dots,k_r$ are odd,  
$$
\renewcommand{\arraystretch}{1.3}
\reg S/I(X) =
\left \{
\begin{array}{ll}
 (k_1+k_2-1)(q-2),&  \text{if\; $\ss \ell =1, r=2$,}\\
 (k_1+\lfloor \frac{k_2}{2}\rfloor +\cdots +\lfloor \frac{k_r}{2} \rfloor)(q-2),&  \text{if\; $\ss \ell =1, r>2$,}\\
 (\frac{k_1}{2} +\cdots + \frac{k_\ell}{2} + k_{\ell +1})(q-2),&  \text{if\; $\ss \ell >1, r=\ell +1$,}\\
 (\frac{k_1}{2} +\cdots + \frac{k_\ell}{2} + \lfloor \frac{k_{\ell +1}}{2}\rfloor +\cdots +\lfloor \frac{k_r}{2} \rfloor)(q-2),&  \text{if\; $\ss \ell >1, r>\ell +1$.}
\end{array}
\right.
$$
\end{theorem}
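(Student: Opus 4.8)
The plan is to compute $\reg S/I(X)$ as the least degree $d_0$ from which the Hilbert function of $S/I(X)$ is constant and equal to $m=|X|$, using the value of $m$ supplied by Theorem~\ref{thm: number of points}, and to proceed by induction on the number $r$ of paths, organized along an ear decomposition of $G$. The base case is $r=2$, where $G$ is a single cycle. If the two path lengths have opposite parity then $G$ is an odd cycle, hence a unicyclic graph with a unique odd cycle, so $X=\TT$, $I(X)$ is a complete intersection, and $\reg S/I(X)=(s-1)(q-2)=(k_1+k_2-1)(q-2)$, matching the line $\ell=1,\,r=2$. If the two lengths have equal parity then $G=\C_{2\ell}$ with $2\ell=k_1+k_2$, and the value $(\ell-1)(q-2)$ recorded in Table~\ref{table: values of reg} is exactly what the bipartite formula predicts in both the all-even and all-odd sub-cases. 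Thus both $r=2$ rows are already known and anchor the induction.

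For the inductive step I would adjoin one path $P_r$ as an ear joining the two hubs of the parallel composition $G'$ of $P_1,\dots,P_{r-1}$, and track how the new edges and internal vertices enlarge the vanishing ideal. The structural point is that, beyond the torus relations \eqref{eq: L9}, the generators of $I(X)$ arise from the even closed walks of $G$, and in a parallel composition these are precisely the even cycles formed by two paths of the \emph{same} parity: two even paths, or two odd paths, produce an even cycle and hence a genuine binomial relation that shrinks $X$ strictly below $\TT$, whereas an even path together with an odd path produces an odd cycle, contributing only relations of square type already present in $I(\TT)$. Consequently the parity profile of $(k_1,\dots,k_r)$ governs the whole computation, and the regularity is largest exactly when one parity class is scarce, since then $X$ stays close to the full torus.

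Concretely, I would set up a short exact sequence relating $S/I(X)$ for $G$ to the corresponding ring for $G'$ through the edges of the new ear, exploiting that every monomial is regular on these rings (as noted before Definition~\ref{def: parity join}) to keep the modules Cohen--Macaulay and to pass from Hilbert functions to regularities. Adjoining an ear of length $k_r$ to a composition that already contains another path of the same parity increments the combinatorial coefficient by $\lfloor k_r/2\rfloor$, which reproduces the generic bipartite formulas and the case $\ell>1,\,r>\ell+1$; when the new ear is the unique representative of its parity, its contribution instead jumps to order $k_r$, and this is the origin of the corrected lines $\ell=1,\,r>2$ and $\ell>1,\,r=\ell+1$, with the fully degenerate odd-cycle correction at $\ell=1,\,r=2$. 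A useful internal check is that setting $q=3$ collapses each formula to its bracketed coefficient, which is then the regularity in the ternary case and can be read off from the parity-join combinatorics of Theorem~\ref{thm: characterizing standard monomials}, the general $q$ being recovered by the $(q-2)$-fold dilation of the extremal configuration.

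The main obstacle, as is typical for regularity, is the lower bound: showing that at one degree below the claimed value the Hilbert function has not yet reached $m$. For the upper bound it suffices to exhibit enough relations—the even-cycle binomials above together with the torus relations—to force the evaluation map $S_d\to K^m$ to be surjective at the claimed degree. For the lower bound I would produce, one degree lower, an explicit homogeneous form vanishing on $X$ that escapes the ideal generated by those relations, equivalently a standard monomial of that degree for the graded reverse lexicographic order; the delicate part is to do this uniformly across the four non-bipartite cases, where the transitions between the generic and the singleton-parity regimes must be handled with care, and to verify that the ear-addition sequence is exact in the relevant degrees so that the inductive increment is exactly as claimed.
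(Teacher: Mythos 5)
There is a genuine gap here --- in fact several, and it is worth noting at the outset that the survey itself does not prove Theorem~\ref{thm: parallel}; it cites \cite{parallel}, so the only proof machinery developed in the paper (Artinian reduction, parity joins, Theorem~\ref{thm: characterizing standard monomials}) is specific to $q=3$ and cannot carry your general-$q$ argument. Your central device, a ``short exact sequence relating $S/I(X)$ for $G$ to the corresponding ring for $G'$'' obtained by adjoining the ear $P_r$, is never constructed, and it is not clear it can be: the two rings live over different polynomial rings (different edge sets), and there is no evident graded map between them with controlled kernel and cokernel. The sequence \eqref{eq: L94} works because it is internal to a single ring, built from multiplication by a regular element; an ear-addition sequence across two different graphs is a different matter entirely, and the experience of \cite{nested} --- where the definition of nested ear decomposition had to be \emph{relaxed} to weak nested ear decompositions precisely to make an ear-by-ear induction go through --- indicates that the naive inductive increment you posit (``$+\lfloor k_r/2\rfloor$ per same-parity ear, jumping to order $k_r$ for a singleton parity'') is exactly the hard content of the theorem, not a step one can assume.

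Two further points. First, your structural claim that beyond the torus relations \eqref{eq: L9} the generators of $I(X)$ come only from even cycles formed by two same-parity paths is unproven and, for non-bipartite $G$, incorrect as stated: pairs of odd closed walks combine into even closed walks and contribute genuine binomial relations (this is precisely what drives the $q$ odd/even dichotomy and the factor $(1/2)^{\gamma-1}$ in Theorem~\ref{thm: number of points}), so in the cases $\ell=1$, $r>2$ and $\ell>1$ the ideal is larger than your accounting allows. Second, you correctly identify the lower bound as the crux, but then defer it entirely; and your proposed shortcut --- verify the coefficient at $q=3$ via Theorem~\ref{thm: regularity of Eulerian ideals} and recover general $q$ ``by $(q-2)$-fold dilation'' --- is not a valid deduction, since linearity of $\reg S/I(X)$ in $q-2$ is part of what the theorem asserts (it holds in all entries of Table~\ref{table: values of reg}, but no general principle in the paper guarantees it). Your base case $r=2$ is sound and consistent with Table~\ref{table: values of reg}, and the $q=3$ consistency check against the parity-join proposition is a good sanity test, but as it stands the proposal is a plausible plan whose two load-bearing steps (the exact sequence and the lower bound) are missing.
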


\subsection{Nested ear decompositions}
We say that $G$ is endowed with an open ear decomposition if there exist 
subgraphs $E_1,\dots,E_r$, with $E_1$ a cycle and $E_2,\dots,E_r$ 
paths such that, for each $i=2,\dots,r$, the end-points of $E_i$ 
are distinct and belong to $E_1\cup \cdots \cup E_{r-1}$, while 
all other vertices do not. The subgraphs $E_1,\dots,E_r$ are called 
the \emph{ears} of the decomposition. Given $i=2,\dots,r$, 
we say that $E_i$ determines a nest interval if both its end-points 
belong to the same $E_j$, for some $j<i$ and, in this case, 
we define the corresponding nest interval to be the sub-path of $E_j$ 
determined by the two end-points of $E_i$. (If $j=1$, we take any of the two 
sub-paths.) In \cite{eppstein}, Eppstein 
defines the notion of nested ear decomposition by requiring that, in addition 
to the original assumptions, all $E_i$, for $i=2,\dots,r$ determine a 
nest interval and, for any two nest intervals contained in a same ear $E_j$, 
either they are disjoint or one is contained in the other.

\begin{theorem}[{\cite[Theorem 4.4]{nested}}]\label{thm: regularity of nested ear decomposition}
Assume $G$ is bipartite and that $E_1,\dots,E_r$ is 
a nested ear decomposition of $G$ with $\epsilon$ ears of even length. Then 
$$
\ts \reg S/I(X) = \frac{|V_G|+\epsilon -3}{2}(q-2).
$$
\end{theorem}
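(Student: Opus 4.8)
The plan is to read the regularity off an Artinian reduction and then induct on the number of ears. Since $S/I(X)$ is Cohen--Macaulay of Krull dimension one and every variable is a non-zero-divisor, the last edge $t_s$ is a linear non-zero-divisor, so the exact sequence
$$
0\to S/I(X)[-1]\stackrel{\cdot t_s}{\longrightarrow} S/I(X)\to S/(I(X),t_s)\to 0
$$
shows that the Hilbert function of the Artinian quotient $A=S/(I(X),t_s)$ is the first difference of that of $S/I(X)$, i.e.\ the $h$-vector. Regularity is preserved by quotienting by a linear non-zero-divisor, and for an Artinian graded algebra it is the top nonzero degree; hence $\reg S/I(X)$ is the largest $d$ with $A_d\neq 0$, and it suffices to locate this degree.

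Next I would reorganize the target value along the ear decomposition. As $G$ is bipartite, the initial cycle $E_1$ has even length $k_1$, so it is one of the $\epsilon$ even ears; counting vertices gives $|V_G|=k_1+\sum_{i=2}^{r}(k_i-1)$. A short manipulation then rewrites the claimed value as a sum of per-ear contributions,
$$
\frac{\reg S/I(X)}{q-2}=\Big(\tfrac{k_1}{2}-1\Big)+\sum_{i=2}^{r}\Big\lceil\tfrac{k_i-1}{2}\Big\rceil,
$$
where the $E_1$-term is exactly the regularity of an even cycle recorded in Table~\ref{table: values of reg}. This settles the base case $r=1$ and isolates what the induction must produce: each added ear of length $k$ should raise the regularity by $\lceil(k-1)/2\rceil(q-2)$.

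For the inductive step I would delete one ear, obtaining a graph $G'$ that still carries a nested ear decomposition, and prove
$$
\reg S/I(X_G)=\reg S/I(X_{G'})+\Big\lceil\tfrac{k-1}{2}\Big\rceil(q-2).
$$
This is where the nesting hypothesis is essential: its laminar family of nest intervals lets me choose the deleted ear $E$ to be innermost, carrying no endpoints of other ears in its interior, so that the even cycle formed by $E$ and its nest interval $W$ meets the rest of $G$ only at $E$'s two endpoints. The edge-variables of $E$ then enter $I(X_G)$ only through binomials supported on even closed walks that cross $E$ and return along $W$, which should let me present $A_G=S/(I(X_G),t_s)$ as built from $A_{G'}$ by adjoining the $k$ new variables subject to these relations and read off the degree increment.

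The main obstacle is precisely this increment. Making it rigorous means controlling the change in the $h$-polynomial under attaching a path at two vertices: a lower bound, exhibiting a standard monomial that survives to degree $\reg S/I(X_{G'})+\lceil(k-1)/2\rceil(q-2)$, and an upper bound, showing none survives beyond it. I expect the cleanest route is an explicit standard-monomial analysis of the lattice ideal $I(X_G)$ adapted to the variables of $E$, combined with the parity bookkeeping that separates odd ears (contributing $\tfrac{k-1}{2}$) from even ears (contributing $\tfrac{k}{2}$). That even/odd split is the genuinely delicate point, and it mirrors the two cases already visible in the parallel-composition formula of Theorem~\ref{thm: parallel}, which this result must recover when all ears share the same pair of endpoints.
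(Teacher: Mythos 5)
Your proposal is a plan rather than a proof: everything that is actually carried out (the Artinian reduction by the linear non-zero-divisor $t_s$, the identity $|V_G|=k_1+\sum_{i\geq 2}(k_i-1)$, the rewriting of $\frac{|V_G|+\epsilon-3}{2}$ as $(\frac{k_1}{2}-1)+\sum_{i\geq 2}\lceil\frac{k_i-1}{2}\rceil$, and the base case via the even-cycle entry of Table~\ref{table: values of reg}) is correct bookkeeping, but the inductive increment lemma --- that attaching an ear of length $k$ raises the regularity by exactly $\lceil\frac{k-1}{2}\rceil(q-2)$ --- is the entire mathematical content of the theorem, and you explicitly defer it (``I expect the cleanest route is\dots''). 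Worse, the two structural claims on which your sketch of that lemma rests are unsound. First, you cannot in general arrange that the cycle $E\cup W$ formed by the deleted ear and its nest interval meets the rest of $G$ only at the endpoints of $E$: an ear to whose interior no later ear attaches (needed so that deletion leaves an ear decomposition) need not have a minimal nest interval, so $W$ may carry endpoints of other ears, and reconciling the two requirements is itself a nontrivial combinatorial step you have not addressed. Second, the claim that the variables of $E$ enter $I(X_G)$ only through binomials supported on $E\cup W$ is unjustified and, as stated, false: $I(X_G)$ contains binomials coming from arbitrary even closed walks that leave the endpoints of $E$ through any part of $G'$, as well as the pure-power differences $t_i^{q-1}-t_j^{q-1}$, so the presentation of $A_G$ over $A_{G'}$ is not local to the attachment cycle.

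For context: the paper offers no proof of this statement; it cites \cite{nested}, and the remark immediately following the theorem is telling. It records that the proof of \cite[Theorem~4.4]{nested} requires \emph{relaxing} the notion of nested ear decomposition to a ``weak nested ear decomposition'' precisely so that an inductive argument can be run --- in other words, the class of nested ear decompositions is not closed under the kind of ear-deletion step your induction relies on, and the cited proof must work in a strictly larger class to make the induction close. So while your high-level strategy (induction on ears with a per-ear regularity increment) points in the same direction as the actual proof, the step you have left open is where the genuine difficulty lies, and the way you have set it up (staying inside the nested class, assuming a clean local attachment) would need to be repaired along the lines of \cite{nested} before the argument could go through.
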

\medskip 

Note that, in particular, it follows that the number of even length ears in any nested ear decomposition of a graph is constant. In the proof \cite[Theorem 4.4]{nested}, it is necessary to relax the definition of nested ear decomposition and, as a result, this theorem holds for a more general notion of ear decomposition called \emph{weak
nested ear decomposition}.

Any parallel composition of paths $P_1,\dots,P_r$
is endowed with a nested ear decomposition, simply by setting 
$E_1$ equal to $P_1\cup P_2$ and,
if $r>2$, by setting $E_i = P_{i+1}$, for all $i=2,\dots,r-1$. 
If the lengths of $P_i$ are all even, then $\epsilon$, with 
respect to the ear decomposition we have defined, is equal to $r-1$. 
As 
$$
\ts |V_G| = (\sum_{i=1}^r k_i) - r + 2
$$ 
we get:
$$
\reg S/I(X) = \ts \frac{|V_G|+\epsilon -3}{2}(q-2) = ( \frac{k_1}{2} +\cdots + \frac{k_r}{2} -1)(q-2),
$$
which agrees with Theorem~\ref{thm: parallel}. If the lengths of the paths are all 
odd, the same can be verified.
\medskip

\subsection{Regularity in the case of ternary codes} If $K=\ZZ/3$ then, as 
mentioned above, the vanishing ideal $I(X)$ coincides with the Eulerian ideal defined over $\ZZ/3$.  

\begin{theorem}[{\cite[Theorem~4.13]{neves}}]
\label{thm: regularity of Eulerian ideals}
Let $K=\ZZ/3$ and $G$ be any graph. Then $\reg S/I(X)$
is equal to the maximum cardinality of a parity join minus $1$.
\end{theorem}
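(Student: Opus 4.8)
The plan is to reduce the theorem to a purely combinatorial identity. Write $M$ for the maximum cardinality of a parity join and set $N:=\max\{d:\J_d\neq\emptyset\}$. I will show first that $\reg S/I(X)=N-1$, using only the machinery already assembled, and then that $N=M$; together these give $\reg S/I(X)=M-1$.

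For the first part, recall that $\dim C_X(d)=\dim_K(S/I(X))_d$ is strictly increasing until it stabilizes at $m=|X|$, so $\reg S/I(X)$ is the least $r$ with $\dim C_X(d)=m$ for all $d\ge r$. By Proposition~\ref{prop: dimension using standard monomials of Artinian reduction} and the bijection of Theorem~\ref{thm: characterizing standard monomials},
$$
\dim C_X(d)=\sum_{i\ge 0}|\J_{d-2i}|, \qquad\text{so}\qquad \dim C_X(d)-\dim C_X(d-2)=|\J_d|.
$$
Hence within each parity class the Hilbert function is non-decreasing with increments $|\J_d|$ and attains its terminal value precisely once $d$ passes the largest index with $\J_d\neq\emptyset$. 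A short parity bookkeeping then yields $\dim C_X(d)=m$ for all $d\ge N-1$ but $\dim C_X(N-2)<m$, whence $\reg S/I(X)=N-1$.

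The heart of the matter is the identity $N=M$. The inequality $N\le M$ is immediate, since every member of $\bigcup_d\J_d$ is a parity join. For the reverse I would use the following Swap Lemma: if $J$ is a parity join and $C$ is a \emph{tight} even Eulerian subgraph---meaning $|J\cap E_C|=\frac{1}{2}|E_C|$---then $J':=J\triangle E_C$ is again a parity join, of the same cardinality since $|J\triangle E_C|=|J|+|E_C|-2|J\cap E_C|=|J|$. To check the defining inequality of $J'$ against an arbitrary even Eulerian subgraph $D$, one computes $|J'\cap E_D|=|J\cap E_D|+|E_C\cap E_D|-2|J\cap E_C\cap E_D|$ and then applies the parity-join inequality of $J$ not to $D$ itself but to the even Eulerian subgraph $C\triangle D$; substituting the tightness identity $|J\cap E_C|=\frac{1}{2}|E_C|$ collapses that inequality exactly into $|J'\cap E_D|\le\frac{1}{2}|E_D|$. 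Recognizing that the passage $D\rightsquigarrow C\triangle D$ is what makes the swap globally admissible---and that tightness is precisely the hypothesis it consumes---is the step I expect to be the main obstacle.

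With the Swap Lemma available, $N\ge M$ follows by an exchange argument. Order subsets of $E_G$ by preferring higher-index edges, declaring $J_1\succ J_2$ when the largest-index edge in their symmetric difference lies in $J_1$, and let $J$ be $\succ$-maximal among parity joins of maximum cardinality $M$. If $J$ omitted the last edge $e^\ast=\max E_C$ of some tight $C$, then $e^\ast\in E_C\setminus J$ and $J':=J\triangle E_C$ would be a parity join of cardinality $M$ containing $e^\ast$; since $J\triangle J'=E_C$ has $e^\ast$ as its top edge, the sets $J$ and $J'$ first differ at $e^\ast\in J'$, giving $J'\succ J$ and contradicting maximality. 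Thus $J$ contains the last edge of every tight Eulerian subgraph, so $J\in\J_M$ and $N\ge M$. Therefore $N=M$, and $\reg S/I(X)=N-1=M-1$.
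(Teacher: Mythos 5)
Your proof is correct. Note that the paper itself supplies no argument for this theorem: it is quoted from \cite[Theorem~4.13]{neves}, and, as the survey remarks just before Theorem~\ref{thm: characterizing standard monomials}, the proof there is organized around fixed-parity $T$-joins and the Gr\"obner-basis/standard-monomial machinery of the Eulerian ideal. Your route is a self-contained alternative built entirely from the survey's own toolkit, and every step checks out. The reduction $\reg S/I(X)=N-1$ with $N=\max\{d:\J_d\neq\emptyset\}$ is sound Hilbert-function bookkeeping: from $\dim C_X(d)-\dim C_X(d-2)=|\J_d|$ one gets $\dim C_X(N+1)=\dim C_X(N-1)$, which by monotonicity and the strict increase of the Hilbert function of points until it reaches $m=|X|$ forces $\dim C_X(N-1)=m$, while $|\J_N|>0$ gives $\dim C_X(N-2)<m$. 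Your Swap Lemma is the genuinely new ingredient relative to what the survey states explicitly, and it is correct: $(J\triangle E_C)\cap E_D=(J\cap E_D)\triangle(E_C\cap E_D)$, the symmetric difference $C\triangle D$ is again an Eulerian subgraph with an even number of edges (the cycle space is closed under symmetric difference, and $|E_C|+|E_D|-2|E_C\cap E_D|$ is even), and substituting tightness of $C$ into the parity-join inequality applied to $C\triangle D$ collapses exactly to $|J'\cap E_D|\le\frac{1}{2}|E_D|$; the degenerate case $E_C=E_D$ gives equality outright, so no case is missed. The extremal argument with the $\succ$-maximal maximum parity join then correctly certifies $\J_M\neq\emptyset$, since a tight $C$ whose top edge $e^\ast$ lay outside $J$ would yield $J'=J\triangle E_C$ with $J\triangle J'=E_C$ and $e^\ast\in J'$, i.e.\ $J'\succ J$. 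Compared with the $T$-join formulation in \cite{neves}, your symmetric-difference exchange (closure of maximum parity joins under swapping along tight Eulerian subgraphs, in the spirit of classical $T$-join exchange arguments) buys a proof readable within this paper alone, at the mild cost of redoing combinatorial work that \cite{neves} obtains as a byproduct of its $T$-join framework.
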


We end this section with a purely combinatorial result on the maximal cardinality of a parity join, which is straightforward 
by combining the previous theorem with the formulas for the regularity given before, with $q=3$.

\begin{proposition}
Denote by $\K_n$ a complete graph on $n$ vertices, 
$\K_{a,b}$ a complete bipartite graph on $n=a+b$ vertices, 
$\K_{a_1,\dots,a_r}$ a complete multipartite graph on 
$n=a_1+\cdots+a_r$ vertices, where $r>2$, $\operatorname{Pc}(k_1,\dots,k_r)$ the parallel 
composition of $r$ paths of lengths $k_1,\dots,k_r$, 
and denote by $\mu(G)$ the maximal cardinality of a parity join. Let $H$ be any bipartite graph 
with a nested ear decomposition having $\epsilon$ even length ears. The following holds:
$$
\renewcommand{\arraystretch}{1.5}
\begin{tabular}{l|l}
 & $\mu(G)$ \\
\hline
$G=\mathcal{K}_{a,b}$ & $\max\{a,b\}$; \\
\hline
$G=\mathcal{K}_n, \:\: \scriptstyle n>3$ & $\lceil \frac{n-1}{2} \rceil +1$; \\
\hline
$G=\mathcal{K}_{\alpha_1,\dots,\alpha_r}$ & $\max\{\alpha_1,\dots,\alpha_r,\lceil \frac{n-1}{2}\rceil\} + 1$; \\
\hline
$G=\operatorname{Pc}(k_1,\dots,k_r)$ \text{and} $k_i$ \text{even} & $\frac{k_1}{2} +\cdots + \frac{k_r}{2}$;\\
\hline
$G=\operatorname{Pc}(k_1,\dots,k_r)$ \text{and} $k_i$ \text{odd} & $\lfloor \frac{k_1}{2}\rfloor +\cdots +\lfloor \frac{k_r}{2} \rfloor + 1$;\\
\hline
$G = H$ & $\frac{|V_G|+\epsilon -1}{2}$.\\
\hline
\end{tabular}
$$
\end{proposition}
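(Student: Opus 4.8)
The plan is to derive every row from the single identity supplied by Theorem~\ref{thm: regularity of Eulerian ideals}: over $K=\ZZ/3$ that result states that, for an arbitrary graph $G$,
$$
\mu(G) = \reg S/I(X) + 1.
$$
Consequently each formula for $\mu(G)$ is obtained by reading off the corresponding value of $\reg S/I(X)$ from the results recalled earlier in the paper, specializing to $q=3$ (so that $q-2=1$), and adding $1$. In other words, the proposition is nothing more than a translation of the known regularity formulas into the language of parity joins.

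First I would treat the complete, complete bipartite, and complete multipartite cases, whose regularities are listed in Table~\ref{table: values of reg}. Setting $q=3$ there gives $\reg S/I(X) = \max\{a,b\}-1$ for $\mathcal{K}_{a,b}$, gives $\reg S/I(X) = \lceil (n-1)/2\rceil$ for $\mathcal{K}_n$ with $n>3$, and gives $\reg S/I(X) = \max\{\alpha_1,\dots,\alpha_r,\lceil (n-1)/2\rceil\}$ for $\mathcal{K}_{\alpha_1,\dots,\alpha_r}$ with $r>2$. Adding $1$ to each yields the first three rows of the table. Next I would handle the two parallel-composition rows. A parallel composition of paths whose lengths share a common parity is bipartite, so the bipartite branch of Theorem~\ref{thm: parallel} applies; with $q=3$ it gives $\reg S/I(X) = (k_1/2+\cdots+k_r/2)-1$ when the $k_i$ are even and $\reg S/I(X) = \lfloor k_1/2\rfloor+\cdots+\lfloor k_r/2\rfloor$ when they are odd, and adding $1$ produces the stated values.

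Finally, for a bipartite graph $H$ equipped with a nested ear decomposition having $\epsilon$ even length ears, Theorem~\ref{thm: regularity of nested ear decomposition} with $q=3$ gives $\reg S/I(X) = (|V_G|+\epsilon-3)/2$, so that $\mu(G) = (|V_G|+\epsilon-3)/2 + 1 = (|V_G|+\epsilon-1)/2$, which is the last row.

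There is essentially no obstacle here, since the proposition is a dictionary between regularity and maximal parity-join cardinality and the work reduces to bookkeeping the substitution $q-2=1$ together with the elementary simplification $(|V_G|+\epsilon-3)/2 + 1 = (|V_G|+\epsilon-1)/2$. The only point that requires a moment's care is verifying that each family genuinely satisfies the hypotheses of the theorem it invokes: in particular, that both parallel-composition rows fall under the bipartite branch of Theorem~\ref{thm: parallel}, and that the restriction $n>3$ is carried along in the complete-graph row so that the Table~\ref{table: values of reg} value remains valid.
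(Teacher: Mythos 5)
Your proposal is correct and is exactly the paper's intended argument: the paper itself states the proposition is ``straightforward by combining the previous theorem with the formulas for the regularity given before, with $q=3$,'' i.e.\ applying Theorem~\ref{thm: regularity of Eulerian ideals} to get $\mu(G)=\reg S/I(X)+1$ and substituting $q-2=1$ into Table~\ref{table: values of reg}, Theorem~\ref{thm: parallel} and Theorem~\ref{thm: regularity of nested ear decomposition}. Your row-by-row verification, including the check that the parallel compositions with lengths of common parity fall under the bipartite branch, matches the paper's route with no gaps.
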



\section{Minimum Distance}\label{sec: mindist}

We recall that the minimum distance $\delta_X(d)$ of the code $C_X(d) \subseteq K^m$ is defined as follows
$$
\delta_X(d)=\mbox{min}\{\,\|a\|, \,a=(a_1,\ldots,a_m) \in C_X(d), \,a \neq 0\,\},
$$
where $\|a\| = |\{i: a_i \neq 0\}|$. Clearly $1\leq \delta_d \leq m$.
The Singleton Bound (see \cite{stich}, p.41) tells us that 
$$\delta_X(d) \leq |X| - \mbox{dim }C_X(d) + 1.$$
Since for $d \geq$ reg $S/I(X)$, dim $C_X(d) = |X|$, we have $\delta_X(d) = 1$, 
for $d \geq$ reg $S/I(X)$.
Moreover, the minimum distance is strictly decreasing until it reaches 
$1$ (\cite{ReSiVi}, \cite{tohaneanu}):
$$
\left\{
\begin{array}{ccl}
\delta_X(d) > 1 & \Rightarrow & \,\delta_{X}(d) > \delta_X(d+1)\\
\delta_X(d) = 1 & \Rightarrow & \,\delta_X(d+1) = 1\\\end{array}
\right.
.$$

The minimum distance is a very difficult parameter to calculate. In the case of evaluation codes, 
this calculation corresponds to counting zeros of homogeneous polynomials.
The next theorem is one of the few cases where we have an explicit formula for the minimum distance.

\begin{theorem}{\cite[Theorem~3.4]{torus}}\label{min dist torus}
When $X=\TT$, the projective torus in $\mathbb{P}^{s-1}$, and $d\geq 1$, 
the minimum distance of $C_X(d)$ is given by 
$$
\delta_X(d)=\left\{\begin{array}{cll}
(q-1)^{s-(k+2)}(q-1-\ell)&\mbox{if}&d\leq (q-2)(s-1)-1\\
1&\mbox{if}&d\geq (q-2)(s-1)
\end{array}
 \right.
$$
where $k$ and $\ell$ are the unique integers such that $k\geq 0$,
$1\leq \ell\leq q-2$ and $d=k(q-2)+\ell$. 
\end{theorem}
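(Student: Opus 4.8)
The plan is to translate the computation of $\delta_X(d)$ into a zero-counting problem. Since $f_0=t_1^d$ does not vanish on $\TT$, the codeword attached to a form $f\in S_d$ has exactly $|\TT|-|Z_X(f)|$ nonzero entries, where $Z_X(f)$ is the zero set of $f$ in $X=\TT$ and $|\TT|=(q-1)^{s-1}$; moreover this codeword is nonzero precisely when $f\notin I(\TT)$. Hence
\[
\delta_X(d) = (q-1)^{s-1} - \max\{\, |Z_X(f)| : f\in S_d,\ f\notin I(\TT)\,\}.
\]
Because $t_1$ is invertible on $\TT$, I would dehomogenize by setting $y_i=t_i/t_1$ and work on the affine torus $(K^*)^{s-1}$: every form of degree $d$ restricts to a polynomial $g(y_2,\dots,y_s)$ of degree at most $d$ with the same zero locus on the torus, and conversely. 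Using $y^{q-1}=1$ for $y\in K^*$, I would then reduce each variable to degree at most $q-2$, which neither alters the zero set on $(K^*)^{s-1}$ nor increases the total degree. Thus the theorem becomes a purely combinatorial question about the minimal number of non-vanishing points of a nonzero reduced polynomial in $s-1$ variables of total degree $d$.

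For the upper bound on $\delta_X(d)$ I would exhibit an explicit extremal polynomial. Fixing distinct elements $\beta_1,\dots,\beta_{q-2}$ of $K^*$ and writing $d=k(q-2)+\ell$, set
\[
\ts g = \Bigl(\prod_{i=2}^{k+1}\prod_{j=1}^{q-2}(y_i-\beta_j)\Bigr)\prod_{j=1}^{\ell}(y_{k+2}-\beta_j),
\]
a reduced polynomial of total degree $k(q-2)+\ell=d$; here $k\le s-2$ exactly because $d\le (q-2)(s-1)-1$, so the variables $y_2,\dots,y_{k+2}$ are available. A point of $(K^*)^{s-1}$ is a non-zero of $g$ precisely when each $y_i$ with $2\le i\le k+1$ avoids $\beta_1,\dots,\beta_{q-2}$, forcing the single remaining value of $K^*$; when $y_{k+2}$ avoids $\beta_1,\dots,\beta_\ell$, leaving $q-1-\ell$ values; and when $y_{k+3},\dots,y_s$ are unconstrained. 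Counting yields exactly $(q-1-\ell)(q-1)^{s-(k+2)}$ non-zeros, so the associated codeword gives $\delta_X(d)\le (q-1)^{s-(k+2)}(q-1-\ell)$.

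The crux, and the main obstacle, is the matching lower bound: any nonzero reduced polynomial of total degree $d$ has at least $(q-1)^{s-(k+2)}(q-1-\ell)$ non-zeros on $(K^*)^{s-1}$. I would argue by induction on the number of variables. The base case of one variable is the statement that a nonzero polynomial of degree $d=\ell\le q-2$ has at most $\ell$ roots in $K^*$, leaving at least $q-1-\ell$ non-zeros. For the inductive step, write $g=\sum_{j=0}^{e}g_j\,y_s^{\,j}$ with leading coefficient $g_e\neq 0$ and $e\le q-2$, and estimate the zeros slice by slice over the $(q-1)^{s-2}$ values of $(y_2,\dots,y_{s-1})$: on a slice where the univariate restriction in $y_s$ is not identically zero it has at most $e$ roots, while the slices on which it vanishes identically lie in the zero locus of $g_e$, a nonzero reduced polynomial in $s-2$ variables of degree at most $d-e$, to which the induction hypothesis applies. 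The delicate point is that one must then optimize over the distinguished degree $e$ and track how the decomposition $d=k(q-2)+\ell$ behaves as a variable is removed—whether it consumes a full block of length $q-2$ or only a partial block of length $\ell$—to see that the resulting bound is exactly $(q-1)^{s-1}-(q-1-\ell)(q-1)^{s-(k+2)}$ zeros. A cleaner alternative is the footprint bound, estimating $|Z_X(f)|$ by the number of monomials standard for $(I(\TT),f)$ under the graded reverse lexicographic order, with the extremal configuration realized by the polynomial above. Either route, combined with the construction and the monotonicity of the claimed formula in $d$ (which lets one pass from reduced polynomials back to arbitrary forms of degree $d$), yields $\delta_X(d)=(q-1)^{s-(k+2)}(q-1-\ell)$ for $d\le (q-2)(s-1)-1$; for $d\ge (q-2)(s-1)=\reg S/I(\TT)$ the code equals $K^m$, so $\delta_X(d)=1$, as already noted.
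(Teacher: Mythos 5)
Note first that the paper you are being compared against contains no proof of this statement: it is quoted verbatim, with attribution, from \cite[Theorem~3.4]{torus}, so the only meaningful comparison is with the original argument there, and your strategy is essentially that one. The sound parts of your proposal: the reduction of $\delta_X(d)$ to a zero-counting problem on the affine torus (dehomogenizing by $y_i=t_i/t_1$ and reducing exponents modulo $y_i^{q-1}-1$, which preserves values on $(K^*)^{s-1}$ and does not increase total degree, with the monotonicity of the claimed formula in $d$ absorbing any degree drop) is correct; your extremal polynomial is correct, since $k\leq s-2$ does follow from $d\leq (q-2)(s-1)-1$ and its number of nonzeros on $(K^*)^{s-1}$ is exactly $(q-1)^{s-(k+2)}(q-1-\ell)$, giving the upper bound on $\delta_X(d)$; and the case $d\geq (q-2)(s-1)$ is immediate because this is $\reg S/I(\TT)$ for the complete intersection $I(\TT)$, so $C_X(d)=K^m$ there.

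The genuine gap is exactly the step you flag and then do not carry out: the matching lower bound, which \emph{is} the theorem. From the slice estimate $|Z(g)|\leq N(q-1)+\bigl((q-1)^{s-2}-N\bigr)e$, with $N$ bounded via $|Z(g_e)|$ and the inductive hypothesis applied to $g_e$ of degree at most $d-e$, one must still show that the maximum over all admissible $e\in\{1,\dots,q-2\}$ of the resulting expression is $(q-1)^{s-1}-(q-1)^{s-(k+2)}(q-1-\ell)$; this requires writing $d-e=k'(q-2)+\ell'$, separating the cases $e\leq\ell$ and $e>\ell$ (partial versus full block consumed), and handling the degenerate cases $e=0$ (the variable is absent, and one must check the inductive formula in $s-2$ variables still dominates the claimed bound at the same $d$), $\deg g_e=0$ (then $N=0$), and $d-e=0$. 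Saying one ``must optimize over $e$ and track the decomposition'' names the difficulty without resolving it --- this elementary but nontrivial case analysis is precisely where the proof in \cite{torus} does its work. Your footprint alternative is in the same position: minimizing the number of standard monomials of $(I(\TT),f)$ over forms $f$ of degree $d$ is an equivalent combinatorial optimization, not a shortcut around it. As written, then, the proposal establishes $\delta_X(d)\leq (q-1)^{s-(k+2)}(q-1-\ell)$ and the trivial range $d\geq (q-2)(s-1)$, but leaves the lower bound, hence the theorem, unproved.
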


Recall that we say that a linear code is maximum distance separable (MDS) if equality holds
in the Singleton Bound. By the theorem above (see also \cite{secondhamming}),
if $X=\TT$ is the projective torus in $\mathbb{P}^{1}$ and $d\geq 1$, 
then $C_X(d)$ is an MDS code and its minimum distance is given by 
$$
\delta_X(d)=\left\{\begin{array}{cll}
q-1-d&\mbox{if}&d\leq q-3\\
1&\mbox{if}&d\geq q-2
\end{array}
 \right.
.$$

As we have seen in Section~\ref{sec: dim}, if $G$ is a connected graph and $X$ is the projective 
algebraic toric set parameterized by the edges of $G$, then $X = \TT$ if and only if $G$ is a tree 
or $G$ is a unicyclic graph with a unique odd cycle.
If $G$ is a forest, we also have $X = \TT$.
Hence, for these graphs, $\delta_X(d)$ is known.

In the case $G$ is a complete bipartite graph, the minimum distance of $C_X(d)$ is also known;
it can be obtained from Theorem~\ref{min dist torus}  together with the following result:

\begin{theorem}{\cite[Theorem~5.5]{GonRen}}\label{mindist completebipartite}
Let $G=\K_{a,b}$ be the complete bipartite graph with $a+b$ vertices, let $X$ be the projective algebraic toric set parameterized by the edges of $G$, and let $X_1$ and $X_2$ be the projective tori in
$\PP^{a-1}\!\!$ and \;$\PP^{b-1}$ respectively. Then
$$
\delta_X(d) = \delta_{X_1}(d) \delta_{X_2}(d) \;.
$$
\end{theorem}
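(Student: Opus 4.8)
The plan is to prove that $C_X(d)$ is literally the tensor product code $C_{X_1}(d)\otimes C_{X_2}(d)$, and then to invoke the classical fact that the minimum distance of a tensor product code is the product of the minimum distances of its factors. First I would fix coordinates for $\K_{a,b}$: label one side of the bipartition $1,\dots,a$ with variables $x_1,\dots,x_a$ and the other side with variables $y_1,\dots,y_b$, so that the $s=ab$ edges are the pairs $\{i,j\}$ and $\varphi(t_{ij})=x_iy_j$. Under this labelling the morphism of the torus is the Segre map $([x],[y])\mapsto[x_iy_j]$, and I would check that it restricts to a bijection $X_1\times X_2\to X$: it is well defined up to the joint rescaling $x\mapsto\lambda x$, $y\mapsto\mu y$, and it is injective because the ratios $x_i/x_{i'}$ and $y_j/y_{j'}$ are recovered from the coordinates $x_iy_j$, all of which are nonzero on the tori. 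This is consistent with Theorem~\ref{thm: number of points}, which gives $|X|=(q-1)^{a+b-2}=|X_1|\,|X_2|$. From now on I index the coordinates of $K^m$ by pairs $(Q,R)\in X_1\times X_2$.

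Next I would describe the image $\varphi(S_d)$. A degree-$d$ monomial $\prod t_{ij}^{a_{ij}}$ maps to $\prod_i x_i^{r_i}\prod_j y_j^{c_j}$ with $r_i=\sum_j a_{ij}$, $c_j=\sum_i a_{ij}$ and $\sum_i r_i=\sum_j c_j=d$, so the image lies in the bidegree-$(d,d)$ component $K[x]_d\otimes K[y]_d$. Conversely every such bimonomial arises, since a nonnegative integer matrix with prescribed row sums $r_i$ and column sums $c_j$ of common total $d$ always exists (a transportation/Gale--Ryser argument); hence $\varphi(S_d)=K[x]_d\otimes K[y]_d$. The crucial computation is then that, for a pure tensor $F=G(x)H(y)$ with $\deg G=\deg H=d$ and a point indexed by $(Q,R)$ with $Q=[q_i]$, $R=[r_j]$, the normalized evaluation factors. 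Since $\varphi(t_1^d)=x_1^dy_1^d$,
$$
\frac{F(q,r)}{q_1^dr_1^d}=\frac{G(q)}{q_1^d}\cdot\frac{H(r)}{r_1^d},
$$
so the codeword attached to $GH$ is exactly the tensor $c_G\otimes c_H$ of the torus codewords $c_G\in C_{X_1}(d)$ and $c_H\in C_{X_2}(d)$.

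From here I would conclude $C_X(d)=C_{X_1}(d)\otimes C_{X_2}(d)$. Any codeword of $C_X(d)$ comes from some $f$ with $\varphi(f)=\sum_k G_kH_k$, hence equals $\sum_k c_{G_k}\otimes c_{H_k}$ and lies in the tensor product code; conversely the pure tensors $c_G\otimes c_H$, which span the tensor product code, are all realized, because $GH\in\varphi(S_d)$ for any $G\in K[x]_d$, $H\in K[y]_d$. Viewing a codeword as a matrix indexed by $X_1\times X_2$, its rows are words of $C_{X_2}(d)$ and its columns words of $C_{X_1}(d)$. For a nonzero codeword, some row is a nonzero word of $C_{X_2}(d)$, hence has at least $\delta_{X_2}(d)$ nonzero entries, forcing at least $\delta_{X_2}(d)$ nonzero columns; each of these is a nonzero word of $C_{X_1}(d)$, hence has at least $\delta_{X_1}(d)$ nonzero entries. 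This yields $\delta_X(d)\geq\delta_{X_1}(d)\delta_{X_2}(d)$, and equality follows by tensoring two minimum-weight words.

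The main obstacle is making the identification of $C_X(d)$ with the tensor product code fully rigorous: specifically, the surjectivity $\varphi(S_d)=K[x]_d\otimes K[y]_d$ and the careful bookkeeping of the point indexing together with the common normalization $t_1^d\mapsto x_1^dy_1^d$. Once the two codes are literally equal, the minimum-distance statement is the standard tensor-product fact; as a consistency check, this identification also recovers the known dimension identity $\dim C_X(d)=k(a,d,q)\,k(b,d,q)$ of \cite[Theorem~5.2]{GonRen}.
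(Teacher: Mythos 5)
Your proof is correct and complete. The paper itself gives no argument for this theorem---its ``proof'' is just the citation \cite[Theorem~5.5]{GonRen}---and your route is essentially the standard one underlying that reference: the Segre-type bijection $X_1\times X_2\to X$, the identification $\varphi(S_d)=K[x]_d\otimes K[y]_d$ and hence $C_X(d)=C_{X_1}(d)\otimes C_{X_2}(d)$, and the classical fact that the minimum distance of a tensor product code is the product of the minimum distances; as you note, this is also consistent with the product formula $\dim C_X(d)=k(a,d,q)\,k(b,d,q)$ quoted in Section~\ref{sec: dim}. Two minor points: the existence of a nonnegative integer matrix with prescribed row and column sums of equal total is elementary (northwest-corner rule suffices; Gale--Ryser, which concerns $0$--$1$ matrices, is more than you need), and your normalization $\varphi(t_1^d)=x_1^d y_1^d$ silently assumes the first edge joins the first vertices of the two sides---a harmless assumption, since replacing the normalizing monomial $f_0$ by any other degree-$d$ monomial rescales each coordinate of the code by a nonzero scalar, producing a monomially equivalent code with the same minimum distance.
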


For the general case of a connected bipartite graph, the following bounds hold:

\begin{theorem}
\label{bounds mindist bipartite}
Let $G=\K_{a,b}$ be a connected bipartite graph with $a+b$ vertices, and let $X$ be the projective algebraic toric set parameterized by the edges of $G$. If $X_1$, $X_2$ and $X_3$ are the projective tori in
$\PP^{a-1}$, \;$\PP^{b-1}\!$ and \;$\PP^{a+b-2}$ respectively, then
$$
\delta_{X_1}(d) \delta_{X_2}(d) \leq \delta_X(d) \leq \delta_{X_3}(d) \;.
$$
\end{theorem}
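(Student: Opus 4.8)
The plan is to derive both inequalities from a single monotonicity principle for edge inclusion among connected bipartite graphs with a fixed bipartition: if $H$ is a connected spanning subgraph of a connected bipartite graph $G'$ sharing the same parts, then $C_{X_H}(d)$ embeds isometrically into $C_{X_{G'}}(d)$, whence $\delta_{X_{G'}}(d)\le \delta_{X_H}(d)$. The upper bound is then the instance where $H=T$ is a spanning tree of $G$ and $G'=G$, while the lower bound is the instance $H=G$ and $G'=\K_{a,b}$. This reduces the theorem to one comparison lemma plus the cited identification of the complete bipartite case.

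The structural input is the cardinality formula of Theorem~\ref{thm: number of points}: every connected bipartite graph on the $a+b$ vertices of $G$ has toric set of size $(q-1)^{a+b-2}$, independently of how many edges it has. In particular $X$, the toric set $X_3=\TT^{a+b-2}$ of a spanning tree, and the toric set of $\K_{a,b}$ all share this common cardinality. First I would observe that whenever $E_H\subseteq E_{G'}$, the coordinate projection $\pi$ of $\PP^{|E_{G'}|-1}$ onto the coordinates indexed by $E_H$ is everywhere defined on the torus; it sends $X_{G'}$ onto $X_H$, because the $H$-parameterization factors as $\pi$ composed with the $G'$-parameterization. Having equal finite source and target, this surjection is forced to be a bijection $\pi\colon X_{G'}\to X_H$.

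Next I would transport codewords along $\pi$. A form $f$ of degree $d$ in the variables $\{t_e : e\in E_H\}$ is simultaneously a form in the variables of $G'$, and for every point $P\in X_{G'}$ one has $f(P)=f(\pi(P))$, since $f$ involves only the retained coordinates, on which $P$ and $\pi(P)$ agree. As the normalizing factor $t_1^d$ never vanishes on a torus, the support of a codeword is exactly the set of points at which $f$ is nonzero; hence $\pi$ matches, coordinate by coordinate, the supports of the $H$-codeword and the $G'$-codeword attached to $f$, and in particular $f\in I(X_H)_d$ if and only if $f\in I(X_{G'})_d$. Consequently each nonzero codeword of $C_{X_H}(d)$ produces a nonzero codeword of $C_{X_{G'}}(d)$ of the same weight, so the set of nonzero weights of $C_{X_H}(d)$ is contained in that of $C_{X_{G'}}(d)$, giving $\delta_{X_{G'}}(d)\le \delta_{X_H}(d)$.

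Finally I would specialize. Applying the lemma to a spanning tree $T\subseteq G$ yields $\delta_X(d)\le \delta_{X_T}(d)$; since a tree gives $X_T=\TT=\TT^{a+b-2}=X_3$, this is the upper bound. Applying it to $G\subseteq \K_{a,b}$ yields $\delta_{X_{\K_{a,b}}}(d)\le \delta_X(d)$, and Theorem~\ref{mindist completebipartite} identifies $\delta_{X_{\K_{a,b}}}(d)=\delta_{X_1}(d)\,\delta_{X_2}(d)$, which is the lower bound. The hard part, and the real crux, is the bijectivity of $\pi$ on the toric sets: it is precisely what forces the two codes to share the same support patterns, and it rests entirely on the coincidence of cardinalities supplied by Theorem~\ref{thm: number of points} for connected bipartite graphs with a fixed bipartition.
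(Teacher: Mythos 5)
Your proposal is correct and follows essentially the same route as the paper: apply the subgraph comparison (the paper's Lemma~\ref{bound mindist subgraph}, cited from \cite{mariarafael}) to a spanning tree $T\subseteq G$ for the upper bound and to $G\subseteq \K_{a,b}$ for the lower bound, invoking Theorem~\ref{thm: number of points} to match cardinalities and Theorem~\ref{mindist completebipartite} to evaluate $\delta_{X_{\K_{a,b}}}(d)=\delta_{X_1}(d)\,\delta_{X_2}(d)$. The only difference is that you re-derive the comparison lemma yourself via the coordinate-projection bijection $\pi\colon X_{G'}\to X_{H}$ and the matching of codeword supports, rather than citing it, and that derivation is sound (your passing claim of an isometric embedding of codes is slightly stronger than needed, but the weight-preservation argument you actually use suffices).
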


These bounds can be explained using Lemma~\ref{bound mindist subgraph} below,
knowing that a connected bipartite graph, $G$,
contains a spanning tree and is contained in a complete bipartite graph with the same partition as $G$.

\begin{lemma}{\cite[Lemma~3.5]{mariarafael}}
\label{bound mindist subgraph}
Suppose $G$ is a subgraph of $G'$, and $X$ and $X'$ are the projective algebraic toric sets parameterized by the respective edges. If $|X| = |X'|$, then
$$
\delta_{X'}(d)  \leq \delta_X(d)  \;.
$$
\end{lemma}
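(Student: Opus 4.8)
The plan is to realize $C_X(d)$ as a subcode of $C_{X'}(d)$ via a weight-preserving linear injection; since enlarging a code can only lower its minimum distance, this gives $\delta_{X'}(d)\leq\delta_X(d)$ at once. To set up compatible coordinates, note that $E_G\subseteq E_{G'}$, so I order the edges of $G'$ so that its first $s=|E_G|$ edges are exactly the edges of $G$, with the first edge common to both. Writing $s'=|E_{G'}|$, this realizes $S=K[t_1,\dots,t_s]$ as a subring of $S'=K[t_1,\dots,t_{s'}]$ and makes the normalizing monomial $f_0=t_1^d$ the same in both evaluation maps.

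Next I would compare $X$ and $X'$ through the coordinate projection $\pi\colon \PP^{s'-1}\dashrightarrow \PP^{s-1}$ that forgets the coordinates $t_{s+1},\dots,t_{s'}$. Since $X'\subseteq \TT^{s'-1}$, every point of $X'$ has all coordinates nonzero, so $\pi$ is defined on all of $X'$ and carries it into $\TT^{s-1}$. The torus parameterization is compatible with $\pi$: if a point of $X'$ is the image under $\varphi$ of $(a_1,\dots,a_{n'})\in\TT^{n'-1}$, then its image under $\pi$ has the coordinates $a_ia_j$ indexed by the edges $\{i,j\}$ of $G$, which is precisely the image of the restricted tuple under the parameterization of $G$; conversely every point of $X$ is hit by extending a point of $\TT^{n-1}$ with $1$'s in the extra vertex coordinates. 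Hence $\pi(X')=X$, and the hypothesis $|X|=|X'|$ forces $\pi|_{X'}\colon X'\to X$ to be a bijection.

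Finally I would transport codewords. Given $f\in S_d$, regarded as an element of $S'_d$ that does not involve $t_{s+1},\dots,t_{s'}$, the ratio $f(P')/t_1^d(P')$ at a point $P'\in X'$ depends only on the first $s$ coordinates of $P'$, hence only on $\pi(P')$, and equals $f(\pi(P'))/t_1^d(\pi(P'))$. Because $\pi|_{X'}$ is a bijection between the two index sets of common size $m=|X|=|X'|$, the codeword of $f$ computed over $X'$ is a coordinate permutation of the codeword of $f$ computed over $X$, and in particular has the same Hamming weight. This produces a weight-preserving linear injection $C_X(d)\hookrightarrow C_{X'}(d)$, and comparing the minimum weight over the full code $C_{X'}(d)$ against the minimum over its subcode yields $\delta_{X'}(d)\leq\delta_X(d)$.

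The step I expect to require the most care is establishing $\pi(X')=X$ when $V_{G'}$ is strictly larger than $V_G$: one must check that the parameterization of $G'$ genuinely restricts to that of $G$ on the shared edge coordinates, and that the normalization by $t_1^d$ is consistent across the two codes so that no spurious rescaling enters. Once the bijection $\pi|_{X'}$ is in hand and the length $m$ is seen to be common to both codes, the remaining argument is purely formal.
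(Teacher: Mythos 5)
Your proof is correct and follows essentially the same route as the source the paper cites for this statement (the paper itself gives no proof beyond the reference to \cite[Lemma~3.5]{mariarafael}): there too, the coordinate projection $\PP^{s'-1}\dashrightarrow\PP^{s-1}$ is shown to restrict to a surjection $X'\to X$, which the hypothesis $|X|=|X'|$ upgrades to a bijection, and a form $f\in S_d\subseteq S'_d$ attaining the minimum weight on $C_X(d)$ then yields a codeword of $C_{X'}(d)$ of the same weight. Your verifications of the delicate points --- well-definedness of the projection on $X'$ (all coordinates nonzero on the torus), surjectivity via extending torus points by $1$'s on $V_{G'}\setminus V_G$, and consistency of the normalization $f_0=t_1^d$ under the chosen edge ordering --- are all sound.
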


\begin{example}
If $G$ is an hexagon $(n=s=6)$ and $X$ is the projective algebraic toric set parameterized by the edges of $G$, the bounds of Theorem~\ref{bounds mindist bipartite} for $q=5$ and $d=1$
are
$$
144 \leq \delta_X(1) \leq 192 \;.
$$
This is a better result than the Singleton bound, which in this case is
$$
\delta_X(1) \leq 256-6+1=251 \;.
$$
\end{example}

We end by stating a result for a connected non-bipartite graph, see {\cite{GoReSa13}}.

\begin{theorem}
Let $G$ be a connected non-bipartite graph  and let $X$ be the projective algebraic toric set parameterized by the edges of $G$. If $X'$ is the projective torus in
$\PP^{|V_G|-1}$, then
$$
\delta_{X'}(2d)  \leq \delta_X(d)  \;.
$$
\end{theorem}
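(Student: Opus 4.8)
The plan is to exploit the edge-parameterization $\varphi$ itself: it pulls back a degree-$d$ form on $X\subseteq\PP^{s-1}$ to a degree-$2d$ form on the torus $X'=\TT^{n-1}\subseteq\PP^{n-1}$, where $n=|V_G|$. The crucial numerical input is Theorem~\ref{thm: number of points}. Because $G$ is connected and non-bipartite we have $b_0(G)=1$ and exactly $\gamma=1$ non-bipartite component, so the formula gives $|X|=(q-1)^{n-1}$ for $q$ both odd and even. Since $|X'|=|\TT^{n-1}|=(q-1)^{n-1}$, we obtain $|X|=|X'|$. This is the precise point at which non-bipartiteness enters: for a bipartite graph containing a cycle the formula carries an extra factor of $q-1$, and the argument below breaks down.

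First I would record that the morphism $\psi\colon\TT^{n-1}\to X$ defining $X$ is a bijection. It is surjective by construction, and it is a homomorphism of the projective tori, so its fibers are the cosets of $\ker\psi$ and all share the cardinality $|\ker\psi|=|\TT^{n-1}|/|X|$. The equality $|X|=|X'|$ forces $|\ker\psi|=1$, hence $\psi$ is injective and therefore bijective. (Equivalently, a surjection between finite sets of equal cardinality is a bijection.)

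Next, take a form $f\in K[t_1,\dots,t_s]_d$ whose image $\operatorname{ev}_X(f)$ in $C_X(d)$ is a nonzero codeword realizing $\delta_X(d)$, and set $g=\varphi(f)\in K[x_1,\dots,x_n]_{2d}$; this has degree $2d$ because $\varphi$ sends each $t_k$ to a quadratic monomial $x_ix_j$. Directly from the definition of $\varphi$, if $\bar{x}\in\TT^{n-1}$ satisfies $\psi(\bar{x})=P$ then $g(\bar{x})=f(P)$, so $g$ vanishes at $\bar{x}$ exactly when $f$ vanishes at $P$. As the normalizing forms $t_1^d$ and $x_1^{2d}$ never vanish on the respective tori, the weight of a codeword counts only the points where its representing form is nonzero; transporting this count along the bijection $\psi$ yields $\|\operatorname{ev}_{X'}(g)\|=\|\operatorname{ev}_X(f)\|=\delta_X(d)$. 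In particular $\operatorname{ev}_{X'}(g)$ is a nonzero element of $C_{X'}(2d)$.

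Finally, $\operatorname{ev}_{X'}(g)$ is one specific nonzero codeword of $C_{X'}(2d)$, so $\delta_{X'}(2d)\le\|\operatorname{ev}_{X'}(g)\|=\delta_X(d)$, which is the assertion. I expect the only delicate step to be the bijectivity of $\psi$ in the second paragraph, i.e.\ the verification through Theorem~\ref{thm: number of points} that the extra factor present in the bipartite case disappears precisely when $G$ is non-bipartite; once $\psi$ is a bijection, the transport of weights through the identity $g(\bar{x})=f(P)$ is routine, in the same spirit as Lemma~\ref{bound mindist subgraph}.
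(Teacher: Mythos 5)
Your proof is correct. Note that the paper itself offers no proof of this theorem --- it is stated with only a citation to \cite{GoReSa13} --- so there is no in-text argument to compare against; your proof is the natural one, in the same spirit as the cited source and as Lemma~\ref{bound mindist subgraph}. The two delicate points both check out. First, Theorem~\ref{thm: number of points} with $b_0(G)=1$ and $\gamma=1$ gives $|X|=(q-1)^{n-1}$ in both the odd and even $q$ cases, which equals $|X'|=|\TT^{n-1}|$, so the parameterizing map $\psi$, a surjection between finite sets of equal cardinality, is indeed a bijection (your coset argument via the group structure of the projective tori is also valid: the monomial map descends to the projective quotients because all its coordinates have degree $2$). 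Second, since the normalizing monomials never vanish on points of the respective tori, a component of a codeword vanishes exactly when the representing homogeneous form vanishes at that point, a condition independent of the chosen affine representative; hence the substitution identity $\varphi(f)(\bar{x})=f(\psi(\bar{x}))$ transports weights exactly, producing a nonzero codeword of $C_{X'}(2d)$ of weight $\delta_X(d)$, from which $\delta_{X'}(2d)\leq\delta_X(d)$ follows. Your closing observation is also accurate: for a connected bipartite graph the same count gives fibers of size $q-1$, so this argument would only yield $\delta_{X'}(2d)\leq(q-1)\,\delta_X(d)$, which is exactly why non-bipartiteness is essential.
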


 
\bibliographystyle{plain}

\end{document}